\newtheorem{thm}{Theorem}[section]
\newtheorem{cor}[thm]{Corollary}
\newtheorem{prop}[thm]{Proposition}
\newtheorem*{thm*}{Theorem}
\theoremstyle{definition}
\newtheorem{dfn}[thm]{Definition}
\newtheorem{rem}[thm]{Remark}
\newtheorem{ques}[thm]{Question}
\newtheorem{ex}[thm]{Example}
\theoremstyle{remark}
\newtheorem*{conv}{Convention}
\newtheorem{claim}{Claim}
\newtheorem*{claim*}{Claim}
\renewcommand{\qedsymbol}{$\blacksquare$}
\numberwithin{equation}{thm}
\def\xx{\text{{\boldmath$x$}}}
\def\yy{\text{{\boldmath$y$}}}
\def\Ker{\operatorname{\mathsf{Ker}}}
\def\syz{\mathsf{\Omega}}
\def\tr{\mathsf{Tr}}
\def\li{\mathbb{L}}
\def\x{\mathsf{X}}
\def\link{\mathsf{L}}
\def\Ext{\operatorname{\mathsf{Ext}}}
\def\m{\mathfrak{m}}
\def\cm{\operatorname{\mathsf{CM}}}
\def\mcm{\operatorname{\mathsf{MCM}}}
\def\lcm{\operatorname{\mathsf{\underline{MCM}}}}
\def\ucm{\operatorname{\mathsf{\overline{MCM}}}}
\def\depth{\operatorname{\mathsf{depth}}}
\def\dim{\operatorname{\mathsf{dim}}}
\def\mod{\operatorname{\mathsf{mod}}}
\def\perf{\operatorname{\mathsf{perf}}}
\def\grade{\operatorname{\mathsf{grade}}}
\def\codim{\operatorname{\mathsf{codim}}}
\def\pd{\operatorname{\mathsf{pd}}}
\def\X{\mathcal{X}}
\def\ann{\operatorname{\mathsf{ann}}}
\def\Hom{\operatorname{\mathsf{Hom}}}
\def\scong{\cong_{\rm st}}
\def\Z{\mathbb{Z}}
\def\C{\mathbb{C}}
\def\p{\mathfrak{p}}
\def\q{\mathfrak{q}}
\def\n{\mathfrak{n}}
\def\soc{\operatorname{\mathsf{soc}}}
\def\ch{\operatorname{\mathsf{char}}}
\begin{document}
\allowdisplaybreaks
\title[Perfect linkage of Cohen--Macaulay modules]{Perfect linkage of Cohen--Macaulay modules\\
over Cohen--Macaulay rings}
\author{Kei-ichiro Iima}
\address{Department of Liberal Studies, National Institute of Technology, Nara College , 22 Yata-cho, Yamatokoriyama, Nara 639-1080, Japan}
\email{iima@libe.nara-k.ac.jp}
\author{Ryo Takahashi}
\address{Graduate School of Mathematics, Nagoya University, Furocho, Chikusaku, Nagoya, Aichi 464-8602, Japan}
\email{takahashi@math.nagoya-u.ac.jp}
\urladdr{http://www.math.nagoya-u.ac.jp/~takahashi/}
\thanks{2010 {\em Mathematics Subject Classification.} 13C14, 13C40}
\thanks{{\em Key words and phrases.} CI-linkage, perfect linkage, Yoshino--Isogawa linkage, Cohen--Macaulay module, perfect module, maximal Cohen--Macaulay approximation}
\thanks{RT was partly supported by JSPS Grant-in-Aid for Scientific Research (C) 25400038}
\dedicatory{Dedicated to Professor Yuji Yoshino on the occasion of his sixtieth birthday}
\begin{abstract}
In this paper, we introduce and study the notion of linkage by perfect modules, which we call perfect linkage, for Cohen--Macaulay modules over Cohen--Macaulay local rings.
We explore perfect linkage in connection with syzygies, maximal Cohen--Macaulay approximations and Yoshino--Isogawa linkage.
We recover a theorem of Yoshino and Isogawa, and analyze the structure of double perfect linkage.
Moreover, we establish a criterion for two Cohen--Macaulay modules of codimension one to be perfectly linked, and apply it to the classical linkage theory for ideals.
We also construct various examples of linkage of modules and ideals.
\end{abstract}
\maketitle
\section{Introduction}

The theory of linkage (to be precise, CI-linkage) of ideals in commutative algebra has been established by Peskine and Szpiro \cite{PS} in the 1970s, and it then has further been studied by Huneke, Migliore, Nagel, Schenzel, Ulrich \cite{H, HU1, HU2, HU3, M, N1, Sc} and many others.
One of the central studies about linkage of ideals targets ideals linked to complete intersection ideals, so-called licci ideals, and it has turned out that in a regular ring Cohen--Macaulay ideals of codimension $2$ and Gorenstein ideals of codimension $3$ are licci \cite{PS, W}.
These are the only classes known to be licci, while several results on licci monomial ideals have been obtained \cite{HU4, KTY, MN}.
On the other hand, the structure of linkage classes, especially even linkage classes, has been deeply investigated by Bolondi, Kustin, Migriore, Miller, Nagel, Nollet and Rao \cite{BM, KM, N1, No, R}. 

The classical linkage theory thus deals only with ideals, but it has also been extended to modules.
The notion of linkage of modules has been introduced and studied by Strooker \cite{St} and Yoshino and Isogawa \cite{YI} for Cohen--Macaulay modules. 
Martsinkovsky and Strooker \cite{MS} have extended the classical linkage without any restrictions. 
Nagel \cite{N2} has introduced a concept of module liaison that extends Gorenstein liaison of ideals and provided an equivalence relation among unmixed modules over a Gorenstein ring.
Recently they have also been investigated by many authors; see \cite{DGHS, DS1, DS2, Ni}.

In this paper, we introduce the notion of perfect linkage.
This is a new notion of linkage for modules centering on perfect modules, and includes the concept of linkage due to Yoshino and Isogawa, which we call Yoshino--Isogawa linkage.
For simplicity, let $R$ be a Gorenstein local ring.
Let $M$ be a Cohen--Macaulay $R$-module of codimension $r$.
We call a homomorphism $f:P\to M$ a {\em perfect morphism} of $M$ if $f$ is surjective and $P$ is a perfect $R$-module of codimension $r$.
Perfect morphisms always exist.
For a perfect morphism $f$ of $M$, we define the {\em perfect link} $\li_fM$ of $M$ with respect to $f$ by $\li_fM=\Ext_R^r(\Ker f,R)$.
Then $\li_fM$ is again a Cohen--Macaulay $R$-module of codimension $r$, and there exists a perfect morphism $g$ of $\li_fM$ such that $\li_g\li_fM\cong M$.
Two Cohen--Macaulay $R$-modules $M$ and $N$ of codimension $r$ are called {\em perfectly linked} if there exists a perfect morphism $f$ of $M$ such that $\li_fM\cong N$.
We should mention that there exists only one perfect linkage class over a regular local ring, which is completely different from the classical linkage theory.
Thus we mainly explore perfect linkage over a singular local ring.

For each $R$-module $M$, let $\syz M,\tr M,\x M,\link M$ denote the syzygy, the transpose, the maximal Cohen--Macaulay approximation and the Yoshino--Isogawa link of $M$, respectively.
(The Yoshino--Isogawa link is defined only for a maximal Cohen--Macaulay module.)
Our first main result analyzes the structure of the maximal Cohen--Macaulay approximations of perfect linkage, and gives the relationship of perfect linkage with Yoshino--Isogawa linkage.

\begin{thm}\label{1}
Let $M$ be a Cohen--Macaulay $R$-module of codimension $r$, and let $f:P\to M$ be a perfect morphism.
Then $\syz^rf:\syz^rP\to\syz^rM$ is a perfect morphism (up to free summands), and there is an exact sequence
$$
0 \to Z \to \li_{\syz^rf}(\syz^rM) \to \li_fM \to 0
$$
of $R$-modules, where $\li_{\syz^rf}(\syz^rM)$ is maximal Cohen--Macaulay and $Z$ has projective dimension $r-1$.
In particular, there are stable equivalences
$$
\x\,\li_fM\scong\li_{\syz^rf}(\syz^rM)\scong\link\,\syz^rM,\qquad\syz^r\li_fM\scong\syz^{r+1}\tr\syz^rM.
$$
\end{thm}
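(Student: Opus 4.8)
The plan is to run everything through the defining exact sequence $0\to K\to P\xrightarrow{f}M\to 0$ with $K=\Ker f$, writing $N^{*}=\Hom_R(N,R)$. First I would record the depth data: by the depth lemma $K$ is Cohen--Macaulay of codimension $r$ (or zero), while $\pd_R P=r$ forces $\syz^rP$ to be free, and $\syz^rM$ and $\syz^rK$ are maximal Cohen--Macaulay. Applying the horseshoe lemma to $0\to K\to P\to M\to 0$ with minimal free resolutions of $K$ and $M$ yields, for each $i\ge 1$, an exact sequence $0\to\syz^iK\to\syz^iP\oplus R^{a_i}\to\syz^iM\to 0$. For $i=r$ the module $\syz^rP\oplus R^{a_r}$ is free, so the resulting surjection onto the maximal Cohen--Macaulay module $\syz^rM$ represents $\syz^rf$ up to the free summand $R^{a_r}$ and is a perfect morphism with kernel $\syz^rK$; this proves the first assertion and shows $\li_{\syz^rf}(\syz^rM)=(\syz^rK)^{*}$, which is maximal Cohen--Macaulay as the dual of a maximal Cohen--Macaulay module over the Gorenstein ring $R$. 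I also note the by-product $\syz^rK\scong\syz^{r+1}M$.

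The core of the proof is the exact sequence, obtained by dualizing a minimal free resolution $F_\bullet\to K$. Since $K$ is Cohen--Macaulay of codimension $r$ over $R$ Gorenstein, $\Ext_R^i(K,R)=0$ for $i\ne r$. Dualizing the syzygy sequences $0\to\syz^{i+1}K\to F_i\to\syz^iK\to 0$ for $0\le i\le r-1$ and splicing, while using $K^{*}=0$ and $\Ext_R^i(K,R)=0$ for $i<r$, produces a complex
$$
0\to F_0^{*}\to F_1^{*}\to\cdots\to F_{r-1}^{*}\to(\syz^rK)^{*}\to 0
$$
which is exact except that the cokernel of the map $F_{r-1}^{*}\to(\syz^rK)^{*}$ equals $\Ext_R^r(K,R)=\li_fM$. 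Taking $Z$ to be the image of $F_{r-1}^{*}\to(\syz^rK)^{*}$, the truncation $0\to F_0^{*}\to\cdots\to F_{r-1}^{*}\to Z\to 0$ is a minimal free resolution of $Z$ of length $r-1$ (its terms are nonzero since $\pd_R K\ge r$), so $\pd Z=r-1$; and by construction $0\to Z\to(\syz^rK)^{*}\to\li_fM\to 0$ is the asserted exact sequence.

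The ``in particular'' statements now follow formally. The sequence just constructed has maximal Cohen--Macaulay middle term and kernel of finite projective dimension, hence is a maximal Cohen--Macaulay approximation of $\li_fM$; by the uniqueness up to free summands in Auslander--Buchweitz theory, $\x\,\li_fM\scong(\syz^rK)^{*}=\li_{\syz^rf}(\syz^rM)$. Since $(\syz^rK)^{*}\scong(\syz\,\syz^rM)^{*}$ and the Yoshino--Isogawa link satisfies $\link N\scong(\syz N)^{*}$ for a maximal Cohen--Macaulay module $N$, taking $N=\syz^rM$ gives $\link\,\syz^rM\scong\li_{\syz^rf}(\syz^rM)$. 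For the last equivalence, applying $\syz^r$ to $0\to Z\to(\syz^rK)^{*}\to\li_fM\to 0$ kills $Z$ because $\pd Z=r-1$, so $\syz^r\li_fM\scong\syz^r\big((\syz^rK)^{*}\big)$; using $\syz^rK\scong\syz\,\syz^rM$ together with the standard identity $(\syz N)^{*}\scong\syz\,\tr N$ for maximal Cohen--Macaulay $N$, this equals $\syz^r\big(\syz\,\tr\syz^rM\big)=\syz^{r+1}\tr\syz^rM$.

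The main obstacle I foresee is the bookkeeping in the second paragraph: verifying exactness of the dualized complex at each spot $F_i^{*}$, checking that the sole cohomology obstruction is precisely $\Ext_R^r(K,R)=\li_fM$, and concluding $\pd Z=r-1$ on the nose. One should also dispose of the trivial case $K=0$ (then $M\cong P$ is perfect) and check the low cases $r=0,1$ directly.
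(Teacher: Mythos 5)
Your proof is correct and follows essentially the same route as the paper: pass to the $r$-th syzygy sequence (horseshoe) to get a perfect morphism of $\syz^rM$ with kernel $\syz^rK\scong\syz^{r+1}M$, dualize a minimal free resolution of $K$ using $\Ext^{<r}_R(K,R)=0$ to produce $Z$ and the short exact sequence, and then invoke Auslander--Buchweitz uniqueness for the stable equivalences. The only differences are cosmetic: the paper proves the statement with $\omega$-duals over a general CM ring with canonical module and gets the last equivalence via $\x N\scong\syz^{-r}\syz^rN$, whereas you work in the Gorenstein case and kill $Z$ by applying $\syz^r$ to the short exact sequence; both are standard and equivalent.
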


This theorem immediately recovers the main result of Yoshino and Isogawa.
Moreover, it yields the following result on double perfect linkage of Cohen--Macaulay modules.

\begin{thm}\label{2}
Let $M,N$ be Cohen--Macaulay $R$-modules of codimension $r$.
\begin{enumerate}[\rm(1)]
\item
Let $f,g$ be perfect morphisms of $M,N$ respectively.
Consider the following four conditions:
$$
{\rm(a)}\ \li_fM\cong\li_gN,\quad
{\rm(b)}\ \Ker f\cong\Ker g,\quad
{\rm(c)}\ \syz^rM\scong\syz^rN,\quad
{\rm(d)}\ \syz^r\li_fM\scong\syz^r\li_gN.
$$
Then the implications ${\rm(a)}\Leftrightarrow{\rm(b)}\Rightarrow{\rm(c)}\Leftrightarrow{\rm(d)}$ hold.
\item
Suppose that $r\le1$ and that {\rm(c)} is satisfied.
Then {\rm(b)} is also satisfied for some perfect morphisms $f,g$ of $M,N$ respectively.
\end{enumerate}
\end{thm}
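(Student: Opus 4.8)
The plan is to prove part (1) by exploiting the duality $(-)^\dagger:=\Ext_R^r(-,R)$ on Cohen--Macaulay modules of codimension $r$ over the Gorenstein ring $R$, together with Theorem \ref{1}, and to prove part (2) by a direct construction of perfect morphisms. I begin with an observation used throughout: for any perfect morphism $f\colon P\to M$ the module $K:=\Ker f$ is itself Cohen--Macaulay of codimension $r$. Indeed, since $\pd_RP=r$, the Auslander--Buchsbaum formula gives $\depth P=\dim R-r=\depth M$, so the depth inequality applied to $0\to K\to P\to M\to 0$ forces $\depth K\ge\dim R-r$, while $\dim K\le\dim P=\dim R-r$; hence $\depth K=\dim K=\dim R-r$. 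In particular $\Ext_R^i(K,R)=0$ for $i\ne r$ and $(K^\dagger)^\dagger\cong K$. As $\li_fM=K^\dagger$, this yields $\Ker f\cong(\li_fM)^\dagger$ and likewise $\Ker g\cong(\li_gN)^\dagger$, so applying the duality $\dagger$ gives $(\mathrm a)\Leftrightarrow(\mathrm b)$ at once.

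For $(\mathrm b)\Rightarrow(\mathrm c)$ I would first establish the stable isomorphism $\syz^{r+1}M\scong\syz^rK$: applying the horseshoe lemma to $0\to K\to P\to M\to 0$ produces, for every $i$, an exact sequence $0\to\syz^iK\to\syz^iP\oplus(\text{free})\to\syz^iM\to 0$, and for $i=r$ the module $\syz^rP$ is free because $\pd_RP=r$, so Schanuel's lemma gives $\syz^rK\scong\syz^{r+1}M$. Hence $(\mathrm b)$ implies $\syz^{r+1}M\scong\syz^{r+1}N$; since $\syz^rM$ and $\syz^rN$ are maximal Cohen--Macaulay (their depth is $\dim R$) and $\syz$ is an autoequivalence of the stable category of maximal Cohen--Macaulay modules, cancelling one syzygy yields $(\mathrm c)$. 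For $(\mathrm c)\Leftrightarrow(\mathrm d)$ I would use the last formula of Theorem \ref{1}, $\syz^r\li_fM\scong\syz^{r+1}\tr\syz^rM$ (and its analogue for $N$): if $(\mathrm c)$ holds, applying the contravariant functor $\tr$ on $\lmod R$ and then $\syz^{r+1}$ gives $(\mathrm d)$. Conversely, applying $\syz$ once more to $(\mathrm d)$ and using the identity $\syz^2\tr X\scong\Hom_R(X,R)$ turns $(\mathrm d)$ into $\syz^r\Hom_R(\syz^rM,R)\scong\syz^r\Hom_R(\syz^rN,R)$; since $\Hom_R(\syz^rM,R)$ and $\Hom_R(\syz^rN,R)$ are maximal Cohen--Macaulay, cancelling $\syz^r$ and then applying the duality $\Hom_R(-,R)$ on maximal Cohen--Macaulay modules recovers $(\mathrm c)$.

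For part (2), assume $r\le1$ and $(\mathrm c)$. If $r=0$, then $M,N$ are maximal Cohen--Macaulay with $M\scong N$, so $M\oplus(\text{free})\cong N\oplus(\text{free})$ and hence the minimal first syzygies satisfy $\syz M\cong\syz N$; taking $f,g$ to be minimal free covers of $M,N$ (these are perfect morphisms, as free modules are perfect of codimension $0$) gives $\Ker f\cong\syz M\cong\syz N\cong\Ker g$. If $r=1$, fix an isomorphism $\syz M\oplus R^a\cong\syz N\oplus R^b=:\Sigma$, and realize $\Sigma$ simultaneously as $\Ker(E\xrightarrow{\pi}M)$ and $\Ker(E'\xrightarrow{\pi'}N)$, where $E$ (resp.\ $E'$) is the minimal free cover of $M$ (resp.\ $N$) enlarged by the free summand $R^a$ (resp.\ $R^b$). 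Because $M$ and $N$ have codimension $1$, their supports miss the minimal primes of $R$, so $\Sigma$ has the same rank as $E$ and as $E'$; picking elements of $\Sigma$ that form a basis after inverting the nonzerodivisors of $R$ produces a free submodule $G\subseteq\Sigma$ with $\Sigma/G$ torsion. Then $P:=E/G$ and $Q:=E'/G$ are nonzero torsion quotients of free modules by full-rank free submodules, hence perfect of codimension $1$, and the surjections $f\colon P\to M$ and $g\colon Q\to N$ induced by $\pi,\pi'$ are perfect morphisms with $\Ker f\cong\Sigma/G\cong\Ker g$, which is $(\mathrm b)$.

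I expect the crux to lie in part (2): condition $(\mathrm c)$ gives only a \emph{stable} isomorphism of syzygies, whereas $(\mathrm b)$ demands an honest isomorphism of kernels, and one cannot simply transport the chosen free submodule $G$ across a stable isomorphism; passing to non-minimal free covers so as to absorb the free summands into the single module $\Sigma$ is what makes this work, and it is essentially why the hypothesis $r\le1$ is imposed (for $r\ge2$ the kernel of a perfect morphism is no longer governed by a single first syzygy). A secondary point needing care is the verification that $\Ker f$ is Cohen--Macaulay of codimension $r$, which is exactly what licenses the $\Ext_R^r$-duality used throughout part (1).
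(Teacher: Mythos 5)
Your proposal is correct and takes essentially the same route as the paper: the $\Ext_R^r(-,R)$-duality for (a)$\Leftrightarrow$(b), the Schanuel/syzygy comparison together with the formula $\syz^r\li_fM\scong\syz^{r+1}\tr\syz^rM$ of Theorem \ref{1} for (b)$\Rightarrow$(c)$\Leftrightarrow$(d), and for part (2) the same construction of a common kernel by factoring out a full-rank free submodule of the (non-minimally realized) common syzygy. Your quotients $E/G$, $E'/G$ are exactly the paper's pushout diagrams in different notation, so the two arguments coincide in substance.
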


This theorem especially characterizes the structure of the double perfect linkage of Cohen--Macaulay modules of codimension one.
Using this result, we can obtain a criterion for two Cohen--Macaulay modules of codimension one to be perfectly linked as follows, where $(-)^*=\Hom_R(-,R)$.

\begin{thm}\label{3}
Let $M,N$ be Cohen--Macaulay $R$-modules of codimension one.
\begin{enumerate}[\rm(1)]
\item
$M$ is doubly perfectly linked to $N$ if and only if $\syz M\scong\syz N$.
\item
$M$ is triply perfectly linked to $N$ if and only if $(\syz M)^*\scong\syz N$.
\item
If $M$ and $N$ are perfectly linked, then they are either doubly or triply perfectly linked.
\end{enumerate}
\end{thm}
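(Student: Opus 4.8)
The plan is to establish the three parts in order, deducing (2) from (1) and (3) from (2); Theorems~\ref{1} and~\ref{2}, specialized to $r=1$, carry essentially all of the weight. The one elementary computation I will need is the identity $\syz^2\tr W\scong W^*$, valid for every finitely generated $R$-module $W$. Indeed, taking a minimal free presentation $F_1\xrightarrow{d}F_0\to W\to 0$, the transpose $\tr W=\operatorname{coker}(d^*)$ has $F_0^*\xrightarrow{d^*}F_1^*\to\tr W\to 0$ as a minimal presentation, so $\syz\tr W=\Im(d^*)$; dualizing $F_1\xrightarrow{d}F_0\to W\to 0$ shows $\Ker(d^*)\cong W^*$, and hence there is an exact sequence $0\to W^*\to F_0^*\to\syz\tr W\to 0$, which gives $\syz^2\tr W\scong W^*$. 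Feeding this into the stable equivalence $\syz^r\li_fM\scong\syz^{r+1}\tr\syz^rM$ of Theorem~\ref{1} with $r=1$ yields, for every perfect morphism $f$ of a codimension-one Cohen--Macaulay module $M$, the formula
$$
\syz\,\li_fM\;\scong\;\syz^2\tr\syz M\;\scong\;(\syz M)^*,
$$
which I will refer to as $(\star)$. (Here $\syz M$ is maximal Cohen--Macaulay because $M$ has depth $\dim R-1$.)

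For part (1): suppose $M$ is doubly perfectly linked to $N$, say $M$ is perfectly linked to $L$ and $L$ is perfectly linked to $N$, with $L\cong\li_fM$ and $N\cong\li_gL$. Two applications of $(\star)$ give $\syz N\scong(\syz L)^*\scong(\syz M)^{**}$, and since $\syz M$ is maximal Cohen--Macaulay over the Gorenstein ring $R$ it is reflexive, so $\syz M\scong\syz N$. Conversely, if $\syz M\scong\syz N$ then condition (c) of Theorem~\ref{2} holds with $r=1$, so Theorem~\ref{2}(2) supplies perfect morphisms $f,g$ of $M,N$ with $\Ker f\cong\Ker g$, whence $\li_fM\cong\li_gN=:L$ by Theorem~\ref{2}(1). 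Then $M$ is perfectly linked to $L$, and $N$ is perfectly linked to $L$; by the symmetry of perfect linkage recalled in the introduction, $L$ is then perfectly linked to $N$, so $M$ is doubly perfectly linked to $N$.

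Part (2) follows formally. If $M$ is triply perfectly linked to $N$, write the chain so that $M$ is perfectly linked to some $L$ which is in turn doubly perfectly linked to $N$; then part (1) gives $\syz L\scong\syz N$, while $(\star)$ gives $\syz L\scong(\syz M)^*$, so $(\syz M)^*\scong\syz N$. Conversely, given $(\syz M)^*\scong\syz N$, choose any perfect morphism $f$ of $M$ and put $L:=\li_fM$; then $(\star)$ gives $\syz L\scong(\syz M)^*\scong\syz N$, so by part (1) $L$ is doubly perfectly linked to $N$, and since $M$ is perfectly linked to $L$ we conclude that $M$ is triply perfectly linked to $N$.

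Finally, for part (3): if $M$ and $N$ are perfectly linked, write $N\cong\li_fM$; then $(\star)$ gives $(\syz M)^*\scong\syz N$, so by part (2) $M$ is triply perfectly linked to $N$ (and if moreover $(\syz M)^*\scong\syz M$, then $\syz M\scong\syz N$ and $M$ is also doubly perfectly linked to $N$ by part (1)). In particular $M$ and $N$ are doubly or triply perfectly linked. The only genuinely non-formal inputs beyond Theorems~\ref{1} and~\ref{2} are the identity $\syz^2\tr W\scong W^*$ and the reflexivity of maximal Cohen--Macaulay modules over $R$; the point that needs care throughout is the translation between the chain-of-links definition of multiple perfect linkage and the syzygy computations, together with the repeated use of symmetry to reverse individual links when reassembling chains.
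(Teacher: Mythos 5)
Your parts (1) and (2) are correct and essentially follow the paper's own route: the key identity $\syz\li_fM\scong\syz^2\tr\syz M\scong(\syz M)^*$ is exactly the formula the paper extracts from Theorem~\ref{1}, and your verification of $\syz^2\tr W\scong W^*$ via Schanuel's lemma is sound; your forward direction of (1) (two applications of $(\star)$ plus reflexivity of the MCM module $\syz M$ over the Gorenstein ring $R$) is a mild variant of the paper's appeal to the implication ${\rm(a)}\Rightarrow{\rm(c)}$ of Theorem~\ref{2}, and the converse directions of (1) and (2) are handled just as in the paper.

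Part (3), however, has a genuine gap. In the paper ``$M$ and $N$ are perfectly linked'' means $M\approx N$, i.e.\ there is a chain $M=M_0\sim M_1\sim\cdots\sim M_n=N$ of direct perfect links of \emph{arbitrary} length $n$; it does not mean $N\cong\li_fM$. Your argument for (3) begins ``write $N\cong\li_fM$'' and therefore only treats the case $n=1$, whereas the entire content of (3) is the collapsing of long chains to length $2$ or $3$. The paper proves this by a parity argument: for $n\ge2$, consecutive even steps give $\syz M_i\scong\syz M_{i+2}$ by (1), so $\syz M\scong\syz N$ when $n$ is even (hence double linkage), while for odd $n$ one combines this with (2) to get $(\syz M)^*\scong\syz N$ (hence triple linkage). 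Your own tools do suffice to close the gap — iterating $(\star)$ along the chain gives $\syz M_{i+1}\scong(\syz M_i)^*$ for each $i$, and reflexivity of the MCM modules $\syz M_i$ then yields $\syz N\scong\syz M$ or $\syz N\scong(\syz M)^*$ according to the parity of $n$, after which (1) or (2) finishes — but as written your proof of (3) establishes only the special case of a single direct link and does not prove the stated theorem.
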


Making use of the above theorems, we actually construct a lot of examples of perfect linkage of modules and classical CI-linkage of ideals, together with various observations on them.

The organization of this paper is as follows.
In Section 2, we make the precise definition of a perfect link, and state several basic properties.
In Section 3, we investigate the relationships of perfect links with syzygies, transposes, maximal Cohen--Macaulay approximations and Yoshino--Isogawa links.
Theorems \ref{1} and \ref{2} are proved in this section.
In Section 4, we apply our results to classical CI-linkage to investigate when two Cohen--Macaulay ideals $I,J$ of codimention $r$ to be CI-linked, and then give a proof of Theorem \ref{3}.
Various examples and observations about perfect linkage of modules and CI-linkage of ideals are also stated in this section.

\begin{conv}
Throughout the present paper, we assume that all rings are commutative and noetherian, and all modules are finitely generated.
Let $(R,\m,k)$ be a $d$-dimensional Cohen--Macaulay local ring with canonical module $\omega$.
The notation $(-)^*$ stands for the $R$-dual functor $\Hom_R(-,R)$.
We denote by $\mod R$ the category of finitely generated $R$-modules.
All subcategories are assumed to be full and closed under isomorphism.
We adopt the convention that the zero $R$-module $0$ satisfies $\dim0=-\infty$, $\depth0=\infty$ and $\pd0=-\infty$.
Cohen--Macaulay and maximal Cohen--Macaulay are abbreviated to CM and MCM, respectively.
\end{conv}

\section{Perfect morphisms and links}

In this section we make the definition of perfect links of CM modules over $R$, and state several basic properties of them.

Let $M$ be an $R$-module.
The {\em codimension} $\codim M$ of $M$ is defined by $\codim M=\dim R-\dim M$.
The {\em grade} $\grade M$ of $M$ is defined as the infimum of the nonnegative integers $n$ with $\Ext_R^n(M,R)\ne0$.
This is equal to the maximum of the lengths of $R$-sequences annihilating $M$.
The equality $\codim M=\grade M$ holds since $R$ is CM, and $\codim0=\grade0=\infty$ by definition.
We say that $M$ is {\em perfect} if $\pd M=\grade M$.
Note that $M$ is perfect if and only if $M$ is a CM $R$-module of finite projective dimension.
In this paper we mainly deal with the following three subcategories of $\mod R$.
\begin{align*}
\cm^r(R)&=\{\text{CM $R$-modules of codimension $r$}\}\cup\{0\}\\
&=\{M\in\mod R\mid\depth M\ge d-r\ge\dim M\}=\{M\in\mod R\mid\Ext^i(M,\omega)=0\text{ for all }i\ne r\},\\
\mcm(R)&=\{\text{MCM $R$-modules}\}\cup\{0\}=\{M\in\mod R\mid\depth M\ge d\}=\cm^0(R),\\
\perf^r(R)&=\{\text{perfect $R$-modules of codimension $r$}\}\cup\{0\}\\
&=\{M\in\mod R\mid\pd M\le\grade M\}=\{M\in\cm^r(R)\mid\pd M<\infty\}.
\end{align*}

CM modules of codimension $r$ and MCM modules over a quotient ring by a regular sequence of length $r$ are closely related with each other as follows.

\begin{prop}\label{cmrmcm}
\begin{enumerate}[\rm(1)]
\item
Let $M$ be an $R$-module, and $\xx=x_1,\dots,x_r$ an $R$-sequence annihilating $M$.
Then $M$ is a CM $R$-module of codimension $r$ if and only if $M$ is a MCM $R/\xx R$-module.
\item
Let $M_1,\dots,M_n$ be CM $R$-modules of codimension $r$.
Then there exists an $R$-sequence $\xx=x_1,\dots,x_r$ such that $M_1,\dots,M_n$ are MCM $R/\xx R$-modules.
\end{enumerate}
\end{prop}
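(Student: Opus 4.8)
The plan is to establish (1) by checking that both the depth and the dimension of $M$ are unchanged under passage from $R$ to $R/\xx R$, and then to obtain (2) by applying (1) to the direct sum $M_1\oplus\cdots\oplus M_n$.

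For (1) I would first note that the case $M=0$ is trivial, so assume $M\neq0$. Since $\xx$ is an $R$-sequence, $R/\xx R$ is a Cohen--Macaulay local ring of dimension $d-r$ with maximal ideal $\m/\xx R$, and since $\xx M=0$ the module $M$ is naturally an $R/\xx R$-module. The key point is that an element $y\in\m$ acts on $M$, and on every iterated quotient $M/(y_1,\dots,y_{i-1})M$, exactly through its residue class in $\m/\xx R$; hence a sequence in $\m$ is $M$-regular if and only if its image in $\m/\xx R$ is $M$-regular, and therefore $\depth_RM=\depth_{R/\xx R}M$. The equality $\dim_RM=\dim_{R/\xx R}M$ is clear from $\operatorname{Supp}_RM=V(\ann M)$. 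It then remains only to unravel definitions: $M$ is a CM $R$-module of codimension $r$ precisely when $\depth_RM=\dim_RM=d-r$, while $M$ is an MCM $R/\xx R$-module precisely when $\depth_{R/\xx R}M=\dim(R/\xx R)=d-r$ (using the inequality $\depth\le\dim$ over $R/\xx R$ to promote the defining inequality to an equality). By the two displayed equalities these conditions agree.

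For (2), put $M=M_1\oplus\cdots\oplus M_n$, which we may assume is nonzero. Then $\dim M=\max_i\dim M_i=d-r$, so $\codim M=r$, and since $R$ is Cohen--Macaulay we get $\grade M=\codim M=r$. By the characterization of grade recalled at the start of this section, there is an $R$-sequence $\xx=x_1,\dots,x_r$ of length $r$ annihilating $M$; since $\ann M=\bigcap_{i=1}^n\ann M_i$, this $\xx$ annihilates every $M_i$. Applying part (1) to each $M_i$ with this common $\xx$ shows that $M_1,\dots,M_n$ are all MCM $R/\xx R$-modules.

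The only point requiring a little care is the depth comparison in (1): one must check that the very notion of an $M$-regular sequence is literally the same over $R$ and over $R/\xx R$, which is exactly where the hypothesis $\xx M=0$ is used. The remaining ingredients---that $\codim M=\grade M$ for every $R$-module $M$ since $R$ is CM, and that $\grade M$ is the length of a maximal $R$-sequence in $\ann M$---are the standard facts already recorded in this section.
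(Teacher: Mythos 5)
Your proof is correct and takes essentially the same route as the paper: part (1) by observing that depth and dimension of $M$ agree over $R$ and $R/\xx R$ so both conditions amount to $\depth M=\dim M=d-r$, and part (2) by producing an $R$-sequence of length $r=\grade$ inside $\bigcap_{i=1}^n\ann M_i$ and then applying (1). The only cosmetic difference is that you get this sequence from the grade of the direct sum $M_1\oplus\cdots\oplus M_n$ together with the characterization of grade recalled in the section, whereas the paper cites \cite[Proposition 1.2.10(c)]{BH} for the intersection of the annihilators directly.
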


\begin{proof}
(1) In both cases one has $\depth M=\dim M=\dim R/\xx R=d-r$.

(2) For each $i$ the module $M_i$ satisfies $\grade M_i=\codim M_i=r$.
By \cite[Proposition 1.2.10(c)]{BH} the ideal $\bigcap_{i=1}^n\ann M_i$ contains an $R$-sequence of length $r$, say, $\xx=x_1,\dots,x_r$.
The assertion follows from (1).
\end{proof}

We say that a subcategory $\X$ of $\mod R$ is {\em closed under extensions} (respectively, {\em closed under kernels of epimorphisms}) if for an exact sequence $0 \to L \to M \to N \to 0$ in $\mod R$ with $L,N\in\X$ (respectively, $M,N\in\X$) one has $M\in\X$ (respectively, $L\in\X$).
Let us investigate the structure of the subcategories $\cm^r(R)$ and $\perf^r(R)$.

\begin{prop}\label{qres}
\begin{enumerate}[\rm(1)]
\item
The subcategory $\cm^r(R)$ of $\mod R$ contains $\perf^r(R)$, and is closed under direct summands, extensions and kernels of epimorphisms.
\item
The functor $\Ext_R^r(-,\omega)$ induces a duality of $\cm^r(R)$.
\item
If $R$ is Gorenstein, then the functor $\Ext_R^r(-,R)$ induces a duality of $\perf^r(R)$.
\end{enumerate}
\end{prop}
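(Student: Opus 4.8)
The plan is to treat the three parts in turn, deriving (1) and (3) by elementary bookkeeping and reducing (2) to the classical duality for maximal Cohen--Macaulay modules. For part (1) I would work from the descriptions recorded above, $\cm^r(R)=\{M\in\mod R\mid\depth M\ge d-r\ge\dim M\}$ (together with $0$) and $\perf^r(R)=\{M\in\cm^r(R)\mid\pd M<\infty\}$; the inclusion $\perf^r(R)\subseteq\cm^r(R)$ is then immediate. For the closure properties I would use that dimension is bounded above by that of an ambient module and is the maximum of the two ends of a short exact sequence, while depth obeys the depth lemma: for $0\to L\to M\to N\to 0$ one has $\depth M\ge\min\{\depth L,\depth N\}$ and $\depth L\ge\min\{\depth M,\depth N+1\}$, and the depth of a direct summand is at least that of the sum. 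Running these inequalities, in each of the three cases the relevant module acquires $\depth\ge d-r\ge\dim$ and hence lies in $\cm^r(R)$ (allowing it to be $0$); alternatively one can play the long exact sequence of $\Ext_R^\bullet(-,\omega)$ off the $\Ext$-vanishing description of $\cm^r(R)$.

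For part (2), the plan is to descend to a quotient ring. Given $M\in\cm^r(R)$, Proposition~\ref{cmrmcm}(2) produces an $R$-sequence $\xx=x_1,\dots,x_r$ with $\xx M=0$; since $\omega$ is MCM, $\xx$ is also an $\omega$-sequence, so $\bar R=R/\xx R$ is a CM local ring of dimension $d-r$ with canonical module $\bar\omega=\Ext_R^r(\bar R,\omega)=\omega/\xx\omega$, and $M$ is MCM over $\bar R$ by Proposition~\ref{cmrmcm}(1). The key input is a natural isomorphism $\Ext_R^r(M,\omega)\cong\Hom_{\bar R}(M,\bar\omega)$ for $R$-modules $M$ killed by $\xx$, which I would obtain from the change-of-rings spectral sequence $\Ext_{\bar R}^p(M,\Ext_R^q(\bar R,\omega))\Rightarrow\Ext_R^{p+q}(M,\omega)$: its $E_2$-page is concentrated in the single row $q=r$, so it collapses to give $\Ext_R^{p+r}(M,\omega)\cong\Ext_{\bar R}^p(M,\bar\omega)$, with $p=0$ the desired identity. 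Since $\Hom_{\bar R}(-,\bar\omega)$ is a duality on $\mcm(\bar R)$ \cite[Theorem 3.3.10]{BH}, the module $\Ext_R^r(M,\omega)\cong\Hom_{\bar R}(M,\bar\omega)$ is MCM over $\bar R$ and killed by $\xx$, hence lies in $\cm^r(R)$ by Proposition~\ref{cmrmcm}(1), and applying the isomorphism twice yields $\Ext_R^r(\Ext_R^r(M,\omega),\omega)\cong\Hom_{\bar R}(\Hom_{\bar R}(M,\bar\omega),\bar\omega)\cong M$. To promote this to a natural equivalence on $\cm^r(R)$, for any prescribed finite collection of objects and morphisms I would use Proposition~\ref{cmrmcm}(2) to choose a single $\xx$ annihilating all of them, so that every identification takes place over one ring $\bar R$ and naturality descends from the $\bar R$-level.

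For part (3), when $R$ is Gorenstein $\omega\cong R$, so part (2) already gives that $\Ext_R^r(-,R)$ is a duality on $\cm^r(R)$; the only extra point is that it preserves $\perf^r(R)$. For $M\in\perf^r(R)$ a minimal free resolution $F_\bullet$ has length $\pd M=\grade M=r$; dualizing into $R$ produces a finite complex of free modules whose cohomology is $\Ext_R^i(M,R)$, which vanishes for $i\neq r$ exactly because $M$ is perfect of grade $r$, so (suitably reindexed) $F_\bullet^*$ is a length-$r$ free resolution of $\Ext_R^r(M,R)$. Hence $\pd\Ext_R^r(M,R)\le r<\infty$, and together with part (2) this places $\Ext_R^r(M,R)$ in $\perf^r(R)$, biduality being inherited from part (2) (or read off from $F_\bullet^{**}\cong F_\bullet$).

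The step I expect to be the main obstacle is the functoriality in part (2): one must ensure that the module-by-module duality isomorphisms assemble into an honest natural isomorphism of functors on all of $\cm^r(R)$ even though the auxiliary regular sequence $\xx$ varies with the module, and that the change-of-rings identification $\Ext_R^r(-,\omega)\cong\Hom_{\bar R}(-,\bar\omega)$ is natural from the outset. Once part (2) is in hand, parts (1) and (3) are routine.
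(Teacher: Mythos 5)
Your argument is correct, and for parts (2) and (3) it is essentially the paper's proof: reduce via Proposition~\ref{cmrmcm} to $\bar R=R/\xx R$, use the isomorphism $\Ext_R^r(M,\omega)\cong\Hom_{\bar R}(M,\omega/\xx\omega)$ (which the paper simply asserts and you justify with the collapsing change-of-rings spectral sequence) together with the canonical-module duality on $\mcm(\bar R)$, and for (3) dualize a length-$r$ free resolution to see that $\Ext_R^r(M,R)$ again has finite projective dimension, hence lies in $\perf^r(R)$ by (2). The one genuinely different ingredient is part (1): you verify the closure properties directly from the characterization $\depth\ge d-r\ge\dim$ using the depth lemma and the behaviour of dimension in short exact sequences, whereas the paper reduces to the fact that $\mcm(A)$ is resolving over a CM local ring, using the observation that if $\xx\subseteq\ann L\cap\ann N$ is an $R$-sequence then $x_1^2,\dots,x_r^2$ annihilates the middle term as well, so that all three modules live over one quotient ring. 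Your route is more elementary and avoids that squaring trick; the paper's route buys uniformity with Proposition~\ref{cmrmcm}, which it reuses throughout. One caveat on your treatment of naturality in (2): choosing a single $\xx$ for each finite collection of objects and morphisms does not by itself assemble into one natural isomorphism on all of $\cm^r(R)$; the cleaner fix is to take the canonical biduality map $M\to\Ext_R^r(\Ext_R^r(M,\omega),\omega)$ (natural by construction, e.g.\ via the derived-category duality $\mathrm{R}\Hom_R(-,\omega)$) and check it is an isomorphism object-by-object through your $\bar R$-computation. The paper glosses over this point in exactly the same way, so this is a presentational remark rather than a gap.
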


\begin{proof}
(1) It is clear that $\cm^r(R)$ contains $\perf^r(R)$.
In general, over a CM local ring $A$ the subcategory $\mcm(A)$ of $\mod A$ is {\em resolving} (i.e., it contains the free $A$-modules and is closed under direct summands, extensions and kernels of epimorphisms).
Also, if $0\to L\to M\to N\to0$ is an exact sequence of $R$-modules and $\xx=x_1,\dots,x_r$ is an $R$-sequence in $\ann L\cap\ann N$, then $\xx^2=x_1^2,\dots,x_r^2$ is an $R$-sequence in $\ann L\cap\ann M\cap\ann N$.
From these facts and Proposition \ref{cmrmcm} we observe that $\cm^r(R)$ is closed under direct summands, extensions and kernels of epimorphisms.

(2) Let $M$ be a module in $\cm^r(R)$.
Proposition \ref{cmrmcm}(2) implies that $M$ is a MCM $R/\xx R$-module for some $R$-sequence $\xx=x_1,\dots,x_r$.
Note that $\Ext_R^r(M,\omega)$ is isomorphic to $\Hom_{R/\xx R}(M,\omega/\xx\omega)$.
As $\omega/\xx\omega$ is a canonical module of $R/\xx R$, the functor $\Hom_{R/\xx R}(-,\omega/\xx\omega)$ makes a duality of $\mcm(R/\xx R)$.
The assertion now follows by using Proposition \ref{cmrmcm}(1).

(3) Let $M$ be a perfect $R$-module of codimension $r$.
Let $0\to F_r\to\cdots\to F_0\to M\to0$ be a free resolution of $M$.
Applying $(-)^*$ to this gives an exact sequence $0\to F_0^*\to\cdots\to F_r^*\to\Ext_R^r(M,R)\to0$.
The module $\Ext_R^r(M,R)$ is CM of codimension $r$ by (2), and hence it is in $\perf^r(R)$.
Using (2) again, we observe that the functor $\Ext_R^r(-,R):\perf^r(R)\to\perf^r(R)$ is a duality.
\end{proof}

Now we give the definition of a perfect link of a CM module.

\begin{dfn}
Let $M$ be a CM $R$-module of codimension $r$.\\
(1) A homomorphism $f:P\to M$ is called a {\em perfect morphism} of $M$ if $f$ is surjective and $P$ is a perfect $R$-module of codimension $r$.\\
(2) For a perfect morphism $f$ of $M$, we define the {\em perfect link} $\li_fM$ of $M$ with respect to $f$ by $\li_fM=\Ext_R^r(\Ker f,\omega)$.
\end{dfn}

\begin{rem}\label{doko}
Every CM module admits a perfect morphism.
Indeed, let $M$ be a CM $R$-module of codimension $r$.
By Proposition \ref{cmrmcm}(2) we find an $R$-sequence $\xx=x_1,\dots,x_r$ such that $M$ is a MCM $R/\xx R$-module.
Take an epimorphism $f:P\to M$ with $P$ a free $R/\xx R$-module.
Then $f$ is a perfect morphism of $M$.
\end{rem}

Here are some fundamental properties of perfect links.

\begin{prop}\label{basli}
Let $M$ be a CM $R$-module of codimension $r$, and let $f:P\to M$ be a perfect morphism.
\begin{enumerate}[\rm(1)]
\item
The modules $\Ker f$ and $\li_fM$ are also CM $R$-modules of codimension $r$.
\item
An exact sequence $0\to\Ext^r(M,\omega)\xrightarrow{\Ext^r(f,\omega)}\Ext^r(P,\omega)\xrightarrow{g}\li_fM\to0$ is induced.
\item
If $R$ is Gorenstein, then $g$ is a perfect morphism and one has $\li_g\li_fM\cong M$.
\end{enumerate}
\end{prop}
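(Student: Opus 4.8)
The plan is to write $K=\Ker f$ and to feed the short exact sequence $0\to K\to P\to M\to 0$ into the structural results on $\cm^r(R)$ and $\perf^r(R)$ established in Proposition \ref{qres}; everything will be a formal consequence of those, together with the identification $\li_fM=\Ext_R^r(K,\omega)$.

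For part (1), I would first note that $P$ lies in $\cm^r(R)$ since it is perfect of codimension $r$ (Proposition \ref{qres}(1)), and $M\in\cm^r(R)$ by hypothesis. Because $\cm^r(R)$ is closed under kernels of epimorphisms (Proposition \ref{qres}(1) again), this forces $K\in\cm^r(R)$, and then $\li_fM=\Ext_R^r(K,\omega)\in\cm^r(R)$ because $\Ext_R^r(-,\omega)$ is a duality of $\cm^r(R)$ by Proposition \ref{qres}(2). The degenerate case $K=0$, that is, $M$ perfect, is harmless since $0\in\cm^r(R)$ by convention.

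For part (2), I would apply the functor $\Ext_R^\bullet(-,\omega)$ to $0\to K\to P\to M\to 0$ and examine the resulting long exact sequence. Since $K$, $P$ and $M$ all belong to $\cm^r(R)$, the cohomological characterization of that subcategory recorded in Section 2 gives $\Ext_R^i(K,\omega)=\Ext_R^i(P,\omega)=\Ext_R^i(M,\omega)=0$ for all $i\ne r$, so the long exact sequence collapses around degree $r$ to
$$0\to\Ext_R^r(M,\omega)\xrightarrow{\ \Ext_R^r(f,\omega)\ }\Ext_R^r(P,\omega)\xrightarrow{\ g\ }\Ext_R^r(K,\omega)\to 0,$$
in which $g$ is the induced map on $\Ext^r$, and $\Ext_R^r(K,\omega)$ is $\li_fM$ by definition.

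For part (3), assume $R$ Gorenstein, so $\omega=R$ and $\li_g(-)=\Ext_R^r(\Ker(-),R)$. From part (2) the map $g$ is surjective, and $\Ext_R^r(P,R)$ is perfect of codimension $r$ by Proposition \ref{qres}(3); hence $g$ is a perfect morphism of $\li_fM$. Reading the same exact sequence with $\omega=R$, the monomorphism $\Ext_R^r(f,R)$ identifies $\Ker g$ with $\Ext_R^r(M,R)$, so
$$\li_g\li_fM=\Ext_R^r(\Ker g,R)\cong\Ext_R^r\!\big(\Ext_R^r(M,R),R\big)\cong M,$$
the last step being the reflexivity provided by Proposition \ref{qres}(2) with $\omega=R$, which applies because $M\in\cm^r(R)$. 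The whole argument is bookkeeping with these two dualities; the one point needing care is that $M$ is only assumed CM of codimension $r$, not perfect, so the final reflexivity must be drawn from the $\cm^r$-duality of Proposition \ref{qres}(2) rather than the $\perf^r$-duality of Proposition \ref{qres}(3). I do not anticipate a genuine obstacle.
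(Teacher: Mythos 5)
Your proof is correct and follows essentially the same route as the paper: part (1) via the closure and duality properties of $\cm^r(R)$ from Proposition \ref{qres}(1)--(2), part (2) by collapsing the long exact sequence of $\Ext_R^\bullet(-,\omega)$ using the vanishing outside degree $r$, and part (3) by identifying $\Ker g$ with $\Ext_R^r(M,R)$ and invoking the duality of Proposition \ref{qres}(2)--(3). No gaps.
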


\begin{proof}
(1) Note that there is an exact sequence
\begin{equation}\label{kpm}
0 \to \Ker f \to P \xrightarrow{f} M \to 0.
\end{equation}
By Proposition \ref{qres}(1) the $R$-module $\Ker f$ is CM of codimension $r$, and so is $\li_fM$ by Proposition \ref{qres}(2).

(2) Applying $\Ext^*(-,\omega)$ to \eqref{kpm} and using (1), we obtain an exact sequence as in the assertion.

(3) Proposition \ref{qres}(3) implies that $\Ext_R^r(P,R)$ is a perfect $R$-module of codimension $r$.
Hence $g$ is a perfect morphism of $\li_fM$, and we have $\li_g(\li_fM)=\Ext_R^r(\Ker g,R)\cong\Ext_R^r(\Ext_R^r(M,R),R)\cong M$, where the last isomorphism follows from Proposition \ref{qres}(2).
\end{proof}

The following result is an immediate consequence of Propositions \ref{basli}(1) and \ref{qres}(2).

\begin{cor}\label{1and2}
Let $M$ and $N$ be CM $R$-modules of codimension $r$.
Let $f:P\to M$ and $g:Q\to N$ be perfect morphisms.
Then $\li_fM\cong\li_gN$ if and only if $\Ker f\cong\Ker g$.
\end{cor}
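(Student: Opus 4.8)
The plan is to deduce the statement directly from Propositions \ref{basli}(1) and \ref{qres}(2), with no extra computation. Write $D=\Ext_R^r(-,\omega)$. By definition $\li_fM=D(\Ker f)$ and $\li_gN=D(\Ker g)$, and by Proposition \ref{basli}(1) both $\Ker f$ and $\Ker g$ belong to $\cm^r(R)$, which is precisely the category on which Proposition \ref{qres}(2) asserts that $D$ is a duality. So the whole question becomes: does $D$ reflect isomorphisms on $\cm^r(R)$?

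First I would settle the ``if'' direction, which is trivial: since $D$ is additive, it preserves isomorphisms, so an isomorphism $\Ker f\cong\Ker g$ is carried by $D$ to an isomorphism $\li_fM\cong\li_gN$. For the converse I would use that a duality reflects isomorphisms. Concretely, Proposition \ref{qres}(2) supplies a natural isomorphism $D\circ D\cong\mathrm{id}$ on $\cm^r(R)$; applying $D$ to an isomorphism $\li_fM\cong\li_gN$, that is $D(\Ker f)\cong D(\Ker g)$, then gives $\Ker f\cong D(D(\Ker f))\cong D(D(\Ker g))\cong\Ker g$. One could equally phrase this without naming $D$, simply applying $\Ext_R^r(-,\omega)$ twice and invoking the natural isomorphism $\Ext_R^r(\Ext_R^r(X,\omega),\omega)\cong X$ valid for $X\in\cm^r(R)$.

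I do not expect any genuine obstacle here, since the entire content is already packaged in the two cited propositions; the corollary is essentially a formal consequence of the duality. The single point that deserves a sentence is the observation that a duality (a contravariant equivalence) reflects isomorphisms, and this is immediate from the existence of the natural isomorphism $D\circ D\cong\mathrm{id}$ furnished by Proposition \ref{qres}(2).
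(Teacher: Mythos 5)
Your argument is correct and is exactly the paper's proof: the corollary is stated there as an immediate consequence of Propositions \ref{basli}(1) and \ref{qres}(2), i.e.\ both kernels lie in $\cm^r(R)$ and the duality $\Ext_R^r(-,\omega)$ on that category preserves and reflects isomorphisms. Nothing further is needed.
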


According to Proposition \ref{basli}, taking perfect links preserves the CM property and the codimension.
We are thus interested in when given two CM modules of the same codimension are connected by taking perfect links repeatedly.
We make the following definitions.

\begin{dfn}
Let $M,N$ be CM $R$-modules of codimension $r$.\\
(1) If there exists a perfect morphism $f:P\to M$ such that $\li_fM$ is isomorphic to $N$, then we say that $M$ is {\em directly perfectly linked} to $N$ (by $P$), and denote this by $M\sim N$.\\
(2) We say that $M$ is {\em perfectly linked} to $N$ if there exist a finite number of CM $R$-modules $M_0,\dots,M_n$ of codimension $r$ such that $M=M_0\sim M_1\sim\cdots\sim M_n=N$.
When this is the case, we write $M\approx N$.\\
(3) If there exist modules $M_0,\dots,M_n$ as in (2) with $n=2$ (respectively, $n=3$), then $M$ is said to be {\em doubly perfectly linked} (respectively, {\em triply perfectly linked}) to $N$.
\end{dfn}

\begin{rem}
Assume that $R$ is Gorenstein.
It follows from Remark \ref{doko} and Proposition \ref{basli}(3) that each $M\in\cm^r(R)$ is doubly perfectly linked to $M$ itself, which especially says $M\approx M$.
It follows from Proposition \ref{basli}(3) again that if $M,N\in\cm^r(R)$ are such that $M\sim N$, then one has $N\sim M$.
Hence $\approx$ is an equivalence relation of the set of isomorphism classes of modules in $\cm^r(R)$.
\end{rem}

The direct perfect linkage is preserved by taking finite direct sums.

\begin{prop}
Let $M_1,\dots,M_n,N_1,\dots,N_n$ be CM $R$-modules of codimension $r$.
If $M_i\sim N_i$ for each $1\le i\le n$, then $\bigoplus_{i=1}^nM_i\sim\bigoplus_{i=1}^nN_i$.
\end{prop}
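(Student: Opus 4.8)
The plan is to reduce to the single-summand definition by exhibiting, for the direct sum, a perfect morphism whose perfect link is the direct sum of the individual perfect links. First I would unwind the hypothesis: for each $i$ there is a perfect morphism $f_i:P_i\to M_i$ with $\li_{f_i}M_i\cong N_i$, i.e.\ $\Ext_R^r(\Ker f_i,\omega)\cong N_i$. Set $P=\bigoplus_{i=1}^n P_i$ and $f=\bigoplus_{i=1}^n f_i:P\to\bigoplus_{i=1}^n M_i$. Then $f$ is surjective because each $f_i$ is, and $P\in\perf^r(R)$ since a finite direct sum of perfect modules of codimension $r$ is again perfect of codimension $r$ (the grade is unchanged and projective dimensions add as a maximum, so $\pd P=\max_i\pd P_i\le r=\grade P$). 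Hence $f$ is a perfect morphism of $\bigoplus_{i=1}^n M_i$.

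Next I would compute the kernel and the perfect link. From the splitting of the short exact sequences $0\to\Ker f_i\to P_i\xrightarrow{f_i}M_i\to0$ we get $\Ker f\cong\bigoplus_{i=1}^n\Ker f_i$ (kernels commute with finite direct sums). Applying $\Ext_R^r(-,\omega)$, which is additive, yields
$$
\li_fM=\Ext_R^r(\Ker f,\omega)\cong\bigoplus_{i=1}^n\Ext_R^r(\Ker f_i,\omega)=\bigoplus_{i=1}^n\li_{f_i}M_i\cong\bigoplus_{i=1}^n N_i.
$$
Therefore $\bigoplus_{i=1}^n M_i$ is directly perfectly linked to $\bigoplus_{i=1}^n N_i$ by $P$, as desired.

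There is no serious obstacle here; the argument is purely formal. The only point needing a line of justification is that $\perf^r(R)$ is closed under finite direct sums, but this is immediate from the characterization $\perf^r(R)=\{M\in\cm^r(R)\mid\pd M<\infty\}$ together with the fact that $\cm^r(R)$ is closed under direct summands (hence under finite direct sums, by Proposition \ref{qres}(1)) and that finite projective dimension is preserved under finite direct sums. Everything else—surjectivity of $f$, additivity of kernels and of $\Ext_R^r(-,\omega)$—is standard.
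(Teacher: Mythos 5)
Your argument is correct and is essentially the same as the paper's proof: both take $f=\bigoplus_i f_i$, observe that $\bigoplus_i P_i$ is again perfect of codimension $r$ so $f$ is a perfect morphism, and use additivity of kernels and of $\Ext_R^r(-,\omega)$ to conclude $\li_f\bigl(\bigoplus_i M_i\bigr)\cong\bigoplus_i N_i$. Your extra remark justifying closure of $\perf^r(R)$ under finite direct sums is a harmless elaboration of a step the paper leaves implicit.
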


\begin{proof}
For each $1\le i\le n$ there exists an exact sequence $0 \to K_i \to P_i \xrightarrow{f_i} M_i \to 0$ with $f_i$ a perfect morphism such that $N_i$ is isomorphic to $\Ext_R^r(K_i,\omega)$.
Then there is an exact sequence $\textstyle0 \to \bigoplus_{i=1}^nK_i \to \bigoplus_{i=1}^nP_i \xrightarrow{f} \bigoplus_{i=1}^nM_i \to 0$, where $f:=\bigoplus_{i=1}^nf_i$ is a perfect morphism.
We thus have
$$
\textstyle\li_f(\bigoplus_{i=1}^nM_i)=\Ext_R^r(\bigoplus_{i=1}^nK_i,\omega)\cong\bigoplus_{i=1}^n\Ext_R^r(K_i,\omega)\cong\bigoplus_{i=1}^nN_i,
$$
which shows the proposition.
\end{proof}

A classical result on linkage of ideals asserts that all the complete intersection ideals of the same codimension belong to the same linkage class.
It would be interesting to compare with this fact the following proposition and corollary.

\begin{prop}\label{peli}
Let $R$ be Gorenstein.
Let $M,N$ be CM $R$-modules of codimension $r$.
\begin{enumerate}[\rm(1)]
\item
Assume that $M$ is perfectly linked to $N$.
Then $M$ is perfect if and only if so is $N$.
\item
Suppose that $M$ and $N$ are perfect.
Then $M$ is directly perfectly linked to $N$ by $M\oplus\Ext_R^r(N,R)$.
\end{enumerate}
\end{prop}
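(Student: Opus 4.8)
The plan is to prove (2) directly by exhibiting an explicit perfect morphism, and then to deduce (1) from the structure of a single direct perfect link. Since $R$ is Gorenstein we have $\omega\cong R$, so $\li_f(-)=\Ext_R^r(\Ker f,R)$ for every perfect morphism $f$. For (2), I would first check that $P:=M\oplus\Ext_R^r(N,R)$ lies in $\perf^r(R)$: the module $M$ is perfect of codimension $r$ by hypothesis, $\Ext_R^r(N,R)$ is perfect of codimension $r$ by Proposition \ref{qres}(3), and a finite direct sum of modules in $\perf^r(R)$ is again in $\perf^r(R)$, because $\cm^r(R)$ is closed under extensions (Proposition \ref{qres}(1)) and finite projective dimension is stable under finite direct sums. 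Now let $f:P\to M$ be the projection onto the first summand. Then $f$ is surjective and $\Ker f\cong\Ext_R^r(N,R)$, so $f$ is a perfect morphism of $M$, and
$$
\li_fM=\Ext_R^r(\Ker f,R)\cong\Ext_R^r(\Ext_R^r(N,R),R)\cong N,
$$
the last isomorphism being the double duality contained in Proposition \ref{qres}(3). Hence $M$ is directly perfectly linked to $N$ by $M\oplus\Ext_R^r(N,R)$.

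For (1), by the definition of $\approx$ and induction on the length of a chain $M=M_0\sim\cdots\sim M_n=N$, it suffices to treat the case $M\sim N$; so fix a perfect morphism $f:P\to M$ with $\li_fM\cong N$. By Proposition \ref{basli}(1) the module $\Ker f$ lies in $\cm^r(R)$, as does $M$, and $N\cong\Ext_R^r(\Ker f,R)$ lies in $\cm^r(R)$ by Proposition \ref{qres}(2). From the exact sequence $0\to\Ker f\to P\to M\to0$ with $\pd P<\infty$ one sees that $\pd M<\infty$ if and only if $\pd\Ker f<\infty$; since $M$ and $\Ker f$ already belong to $\cm^r(R)$, this means $M$ is perfect if and only if $\Ker f$ is perfect. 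Finally, Proposition \ref{qres}(3) shows that $\Ker f$ is perfect if and only if $\Ext_R^r(\Ker f,R)\cong N$ is perfect: one implication is that $\Ext_R^r(-,R)$ carries $\perf^r(R)$ into itself, and the other follows by applying $\Ext_R^r(-,R)$ once more and invoking the involution of Proposition \ref{qres}(2). Composing the two equivalences yields that $M$ is perfect if and only if $N$ is.

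I do not anticipate a serious obstacle; the whole argument is a matter of assembling Propositions \ref{qres} and \ref{basli}. The only points needing a little care are the closure of $\perf^r(R)$ under finite direct sums used in part (2), and, in part (1), the observation that $M$, $\Ker f$, and $N$ are each a priori CM of codimension $r$, so that for each of them being perfect is the same as having finite projective dimension.
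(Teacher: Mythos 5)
Your proposal is correct and follows essentially the same route as the paper: part (2) is the paper's argument verbatim (projection from $M\oplus\Ext_R^r(N,R)$ plus the duality of Proposition \ref{qres}), and part (1) is the paper's single-link argument via the exact sequence $0\to\Ker f\to P\to M\to0$ and Proposition \ref{qres}(3), with the symmetry and induction along the chain made explicit rather than left implicit.
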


\begin{proof}
(1) Let $f:P\to M$ be a perfect morphism.
Taking the kernel, we have an exact sequence $0\to K \to P \xrightarrow{f} M \to 0$.
Suppose that $M$ is perfect.
Then Proposition \ref{qres}(1) implies that $K$ is also a perfect module of codimension $r$, and so is $\li_fM=\Ext_R^r(K,R)$ by Proposition \ref{qres}(3).
This shows the assertion.

(2) Put $P=M\oplus\Ext_R^r(N,R)$.
It is seen from Proposition \ref{qres}(3) that $P$ is a perfect module of codimension $r$.
Hence the projection $f:P\to M$ is a perfect morphism of $M$.
We have $\li_fM=\Ext_R^r(\Ker f,R)\cong\Ext_R^r(\Ext_R^r(N,R),R)\cong N$, where the last isomorphism follows from Proposition \ref{qres}(2).
\end{proof}

As a direct consequence of Proposition \ref{peli}(2), we have the following decisive result.

\begin{cor}
If $R$ is regular, then any two CM $R$-modules of the same codimension are directly perfectly linked.
\end{cor}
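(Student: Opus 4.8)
The plan is to reduce the statement directly to Proposition \ref{peli}(2). Since a regular local ring is Gorenstein, that proposition is available: it asserts that any two \emph{perfect} CM $R$-modules of codimension $r$ are directly perfectly linked, explicitly $M\sim N$ via the perfect module $M\oplus\Ext_R^r(N,R)$. So the only thing left to check is that, over a regular local ring, \emph{every} CM $R$-module of codimension $r$ is already perfect.

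This last point is immediate from the Auslander--Buchsbaum--Serre theorem: a regular local ring has finite global dimension, hence every finitely generated $R$-module has finite projective dimension. Thus if $M\in\cm^r(R)$ then $\pd M<\infty$, and by the description $\perf^r(R)=\{M\in\cm^r(R)\mid\pd M<\infty\}$ recorded at the start of Section 2 we conclude $M\in\perf^r(R)$. Applying this to two given CM $R$-modules $M,N$ of codimension $r$, both lie in $\perf^r(R)$, and Proposition \ref{peli}(2) then gives that $M$ is directly perfectly linked to $N$. (The edge case $M=N=0$, i.e.\ $r=\infty$ under our conventions, is covered trivially, since then the proposed linking module is $0$.)

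There is essentially no obstacle here; the entire content sits in Proposition \ref{peli}(2), and the corollary merely records that regularity collapses $\cm^r(R)$ onto $\perf^r(R)$. If desired, one could append the one-line observation that, combined with the fact that $\approx$ is an equivalence relation on $\cm^r(R)$ when $R$ is Gorenstein, this shows there is a single perfect linkage class over a regular local ring --- the sharp contrast with classical CI-linkage emphasized in the introduction.
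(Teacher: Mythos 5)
Your argument is correct and is exactly the paper's: the corollary is stated there as a direct consequence of Proposition \ref{peli}(2), with the (implicit) observation that regularity forces every CM module of codimension $r$ to have finite projective dimension and hence to be perfect. Nothing further is needed.
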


Thus, over a regular local ring there exists only one perfect linkage class for each codimension.
Consequently, our main interests are in perfect linkage of modules over singular local rings.

We close this section by stating a result on the minimal number of generators of a perfect module appearing in a perfect morphism linking two cyclic CM modules of the same codimension.
For a local ring $A$ with residue field $K$ and an $A$-module $M$ we denote by $\nu_A(M)$ the {\em minimal number of generators} of $M$, and by $r_A(M)$ the {\em type} of $M$, namely, $\nu_A(M)=\dim_K(M\otimes_AK)$ and $r_A(M)=\dim_K\Ext_A^{\depth M}(K,A)$.
Note that for an ideal $I$ of $A$ annihilating $M$ one has $\nu_A(M)=\nu_{A/I}(M)$ and $r_A(M)=r_{A/I}(M)$.

\begin{prop}
Let $I,J$ be ideals of $R$ with height $r$ such that $R/I$ is directly perfectly linked to $R/J$.
Then for any perfect morphism $P$ of $R/I$ one has $\nu_R(P)\le r_R(R/J)+1$.
In particular, if $R/J$ is Gorenstein, then $P$ is generated by at most two elements.
\end{prop}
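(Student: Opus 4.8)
The plan is to identify the kernel of the perfect morphism with the canonical module of $R/J$ and then run an elementary count of minimal generators.

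Given any perfect morphism $f\colon P\to R/I$ with $\li_f(R/I)\cong R/J$ (one exists by hypothesis), set $K=\Ker f$, so there is an exact sequence
$$
0\to K\to P\xrightarrow{f}R/I\to0
$$
with $\li_f(R/I)=\Ext_R^r(K,\omega)\cong R/J$. Since $P\in\perf^r(R)\subseteq\cm^r(R)$ and $R/I\in\cm^r(R)$, Proposition \ref{qres}(1) gives $K\in\cm^r(R)$, and then applying the duality $\Ext_R^r(-,\omega)$ of $\cm^r(R)$ from Proposition \ref{qres}(2) to the isomorphism $\Ext_R^r(K,\omega)\cong R/J$ yields $K\cong\Ext_R^r(R/J,\omega)$.

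The crux is the next step: since $R/J$ is a Cohen--Macaulay module of codimension $r$ over the Cohen--Macaulay ring $R$ with canonical module $\omega$, the standard description of the canonical module of a Cohen--Macaulay quotient (see \cite[\S3.3]{BH}) identifies $\Ext_R^r(R/J,\omega)$ with the canonical module $\omega_{R/J}$ of the local ring $R/J$. Hence $K\cong\omega_{R/J}$, and therefore
$$
\nu_R(K)=\nu_R(\omega_{R/J})=\nu_{R/J}(\omega_{R/J})=r_{R/J}(R/J)=r_R(R/J),
$$
where the decisive equality is the classical fact that the type of a Cohen--Macaulay local ring equals the minimal number of generators of its canonical module, and the remaining equalities record that $\nu$ and $r$ are unchanged under passage to the quotient by an annihilating ideal.

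Finally, tensoring the exact sequence with the residue field $k$ and using $\nu_R(R/I)=1$ gives $\nu_R(P)\le\nu_R(K)+\nu_R(R/I)=r_R(R/J)+1$; when $R/J$ is moreover Gorenstein its type is $1$, so $\nu_R(P)\le2$. I expect no genuine obstacle here: once the identification $K\cong\omega_{R/J}$ is in hand everything reduces to a one-line generator count, so that identification (and verifying its hypotheses) is the only point requiring care.
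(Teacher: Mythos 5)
Your argument is correct and follows the paper's own proof essentially step for step: take the exact sequence $0\to K\to P\to R/I\to 0$, use the duality $\Ext_R^r(-,\omega)$ on $\cm^r(R)$ to identify $K\cong\Ext_R^r(R/J,\omega)$, and bound $\nu_R(P)\le\nu_R(K)+1$ with $\nu_R(K)=r_R(R/J)$. The only cosmetic difference is that you spell out the identification $\Ext_R^r(R/J,\omega)\cong\omega_{R/J}$ and the equality $\nu(\omega_{R/J})=r(R/J)$, where the paper simply cites \cite[Proposition 3.3.11(b)]{BH}.
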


\begin{proof}
First of all, note that $R/I$ and $R/J$ are CM $R$-modules of codimension $r$.
Let $0 \to K \to P \xrightarrow{f} R/I \to 0$ be an exact sequence of $R$-modules such that $f$ is a perfect morphism.
Then $K$ is isomorphic to $\Ext_R^r(R/J,\omega)$ by Proposition \ref{qres}(2).
The exact sequence gives an inequality $\nu_R(P)\le\nu_R(K)+\nu_R(R/I)=\nu_R(K)+1$, and we have $\nu_R(K)=\nu_R(\Ext_R^r(R/J,\omega))=r_R(R/J)$ by \cite[Proposition 3.3.11(b)]{BH}.
\end{proof}

\section{MCM approximations and Yoshino--Isogawa links}

In this section, we investigate the relationships of perfect links with syzygies, MCM approximations and Yoshino--Isogawa links, that is, links in the sense of Yoshino and Isogawa \cite{YI}.
First, we give a result on the structure of a MCM approximation of a perfect link, by which we recover the main result of Yoshino and Isogawa.
Next, we relate the condition that two CM modules $M,N$ of codimension $r$ are doubly perfectly linked with the existence of stable equivalences of syzygies, and in the case where $r\le1$ we obtain a complete criterion for $M,N$ to be doubly perfectly linked.

Recall that two $R$-modules $M,N$ are called {\em stably equivalent}, denoted by $M\scong N$, if there is an isomorphism $M\oplus F\cong N\oplus G$ of $R$-modules with $F,G$ free.
We denote by $\lcm(R)$ the {\em stable category} of $\mcm(R)$.
The objects of the category $\lcm(R)$ are defined to be the MCM $R$-modules, and the hom-set $\Hom_{\lcm(R)}(M,N)$ is defined as the quotient module of $\Hom_R(M,N)$ by the submodule consisting of homomorphisms $M\to N$ factoring through free $R$-modules.
Note that two MCM $R$-modules $M,N$ are stably equivalent if and only if they are isomorphic in the category $\lcm(R)$.
Dually, $\ucm(R)$ stands for the {\em costable category} of $\mcm(R)$.
The objects of $\ucm(R)$ are the MCM $R$-modules, and the hom-set $\Hom_{\ucm(R)}(M,N)$ is the quotient of $\Hom_R(M,N)$ by homomorphisms $M\to N$ factoring through finite direct sums of copies of the canonical module $\omega$.

For each $R$-module $M$, denote by $\syz M$ (or $\syz_RM$) and $\tr M$ (or $\tr_RM$) the {\em syzygy} and {\em transpose} of $M$ respectively, that is to say, if $F_1\xrightarrow{\phi}F_0\to M\to0$ is part of a minimal free resolution of $M$, then $\syz M$ is the image of $\phi$ and $\tr M$ is the cokernel of the $R$-dual of $\phi$.
Note that $\syz M$ and $\tr M$ are uniquely determined up to isomorphism because so is a minimal free resolution of $M$.

Suppose now that $R$ is Gorenstein.
Then $\lcm(R)$ and $\ucm(R)$ coincide, and $\syz,\tr$ induce an autoequivalence and a duality of $\lcm(R)$, respectively.
For each MCM $R$-module $M$ the {\em cosyzygy} $\syz^{-1}M$ of $M$ is defined as $(\syz(M^*))^*$.
This is uniquely determined up to isomorphism, and induces a quasi-inverse functor to the syzygy functor $\syz:\lcm(R)\to\lcm(R)$.
For an integer $n>0$ we denote by $\syz^n,\syz^{-n}$ the $n$-fold composition of $\syz,\syz^{-1}$ respectively.
For the details of (co)syzygies and transposes, we refer the reader to \cite{ABr}.

Next we recall the definition of a MCM approximation introduced by Auslander and Buchweitz \cite{ABu}.

\begin{dfn}
Let $M$ be an $R$-module.
A homomorphism $f:X\to M$ of $R$-modules with $X$ MCM is called a {\em MCM approximation} of $M$ if every homomorphism $f':X'\to M$ of $R$-modules with $X'$ MCM factors through $f$.
When this is the case, we set $\x_RM=X$.
\end{dfn}

\begin{rem}\label{mcmappr}
If $f:X\to M$ is a MCM approximation, then one has an exact sequence
$$
0 \to Y \to X \xrightarrow{f} M \to 0
$$
of $R$-modules such that $Y$ has finite injective dimension by \cite[Theorem A]{ABu}.
Conversely, if there exists such an exact sequence, then $f$ is a MCM approximation by \cite[Theorem 2.3]{ABu}.
If $f_1:X_1\to M$ and $f_2:X_2\to M$ are MCM approximations, then $X_1\oplus\omega^{\oplus n_1}\cong X_2\oplus\omega^{\oplus n_2}$ as $R$-modules for some $n_1,n_2\ge0$; see \cite[Theorem B]{ABu}.
In other words, $X_1\cong X_2$ in the costable category $\ucm(R)$.
In particular, when $R$ is Gorenstein, $\x_RM$ is uniquely determined up to stable equivalence for each $R$-module $M$, and in fact one has $\x_RM\scong\syz^{-i}\syz^iM$ for any $i\ge d-\depth M$ by \cite[Proposition (2.21) and Corollary (4.22)]{ABr}.
\end{rem}

Now we state and prove the first main result of this section, which clarifies the structure of a MCM approximation of a perfect link.

\begin{thm}\label{mg}
Let $M$ be a CM $R$-module of codimension $r$, and let $f:P\to M$ be a perfect morphism.
Then there exist exact sequences of $R$-modules
$$
0 \to Z \to \li_g(\syz^rM) \xrightarrow{\phi} \li_fM \to 0,\qquad
0 \to \omega^{\oplus n_{r-1}} \xrightarrow{\partial_{r-1}} \cdots \xrightarrow{\partial_1} \omega^{\oplus n_0} \to Z \to 0,
$$
where $g$ is a perfect morphism of $\syz^rM$ induced by $\syz^rf$, and $\partial_j$ is represented by an $n_{j-1}\times n_j$ matrix with entries in $\m$ for $1\le j\le r-1$.
In particular, $\phi$ is a MCM approximation of $\li_fM$, and one has an isomorphism $\x_R(\li_fM)\cong\li_g(\syz^rM)$ in $\ucm(R)$.
\end{thm}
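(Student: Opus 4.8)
The plan is to track the perfect morphism $f:P\to M$ through the $r$-th syzygy functor, produce a perfect morphism $g$ of $\syz^r M$, and then compute the perfect link of $\syz^r M$ in two steps: first dualize, then identify the resulting module as an MCM approximation of $\li_f M$ with a finite-injective-dimension kernel that is, moreover, built from copies of $\omega$. Start from the exact sequence $0\to K\to P\xrightarrow{f}M\to 0$ with $K=\Ker f$; by Proposition \ref{basli}(1) both $K$ and $\li_f M=\Ext_R^r(K,\omega)$ lie in $\cm^r(R)$. Take minimal free resolutions and apply the horseshoe/syzygy functor $\syz^r$. Since $P$ is perfect of codimension $r$, its $r$-th syzygy $\syz^r P$ is free (up to free summands this is automatic, but minimality of the resolution gives it on the nose), so $\syz^r f:\syz^r P\to\syz^r M$ is, after discarding a possible free summand of $\syz^r P$, a surjection from a free — hence perfect of codimension $0$, i.e.\ an honest perfect morphism $g$ of the MCM module $\syz^r M$. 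The kernel of $g$ is $\syz^r K$ (again up to free summands), and this is where the bookkeeping needs care: one must keep the free summands from both the horseshoe construction and the non-minimality of $\syz^r P$ under control so that $\Ker g\scong\syz^r K$.

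**Next I would** compute $\li_g(\syz^r M)=\Ext_R^r(\Ker g,\omega)\cong\Ext_R^r(\syz^r K,\omega)$, using that $\Ker g\in\cm^0(R)=\mcm(R)$ so only the degree-$0$ Ext survives and in fact $\Ext_R^r(\syz^r K,\omega)\cong\Hom_R(\syz^r\Ker g\cdots)$ — more precisely, dimension-shifting along the free resolution of $K$ relates $\Ext_R^r(K,\omega)$ and $\Hom$-data of $\syz^r K$. The cleanest route: splice the free resolution $0\to \syz^r K\to F_{r-1}\to\cdots\to F_0\to K\to 0$ (after the free-summand cleanup, the $F_j$ are the minimal free modules of $K$), apply $\Hom_R(-,\omega)$, and use $\Ext_R^i(K,\omega)=0$ for $i\ne r$ (as $K\in\cm^r(R)$, Proposition \ref{qres}) to extract the exact sequence
$$
0\to \Hom_R(K,\omega)\to \Hom_R(F_0,\omega)\to\cdots\to\Hom_R(F_{r-1},\omega)\to \Hom_R(\syz^r K,\omega)\to \li_f M\to 0 .
$$
But $\Hom_R(K,\omega)=0$ since $\codim K=r\ge 1$ forces $\depth_{\ann K}\omega>0$ — wait, more carefully, $\Hom_R(K,\omega)\cong\Ext_R^0$ vanishes because $\grade K=r\ge 1$; if $r=0$ the statement is trivial, so assume $r\ge 1$. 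Then $\Hom_R(\syz^r K,\omega)=\li_g(\syz^r M)$ (this \emph{is} $\li_g$ of $\syz^rM$ since $\Ker g=\syz^rK$ up to free summands and dualizing a free summand over Gorenstein $R$ gives a free — note $\omega\cong R$), and the complex $0\to\Hom_R(F_0,\omega)\to\cdots\to\Hom_R(F_{r-1},\omega)$ has homology concentrated so that its $(r-1)$-st term surjects onto $\li_g(\syz^r M)$ with kernel $Z$ equal to the image of $\Hom_R(F_{r-2},\omega)\to\Hom_R(F_{r-1},\omega)$. Since each $F_j$ is free and $\omega$ over Gorenstein $R$ is $R$ itself, this gives the second exact sequence $0\to R^{\oplus n_{r-1}}\xrightarrow{\partial_{r-1}}\cdots\xrightarrow{\partial_1}R^{\oplus n_0}\to Z\to 0$; minimality of the resolution of $K$ makes the $\partial_j$ have entries in $\m$.

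**Hold on** — the theorem asserts the resolution of $Z$ is by copies of $\omega$, not $R$, and that is automatic here since $R$ is Gorenstein so $\omega\cong R$; I will phrase it as $\omega^{\oplus n_j}$ to match the statement but invoke $\omega\cong R$. **Finally**, to conclude that $\phi:\li_g(\syz^r M)\to\li_f M$ is an MCM approximation: $\li_g(\syz^r M)$ is MCM because it is the perfect link of an MCM module (Proposition \ref{basli}(1) with the ambient module in $\cm^0(R)$), and $Z$ has finite injective dimension because it has a finite resolution by modules of finite injective dimension — over Gorenstein $R$, $\omega\cong R$ is free hence of finite injective dimension, and finite injective dimension is closed under extensions and cokernels of the relevant maps, or simply $\pd_R Z<\infty$ hence $\operatorname{id}_R Z<\infty$ over Gorenstein. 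Then Remark \ref{mcmappr} (the converse direction, \cite[Theorem 2.3]{ABu}) says $\phi$ is an MCM approximation, and \cite[Theorem B]{ABu} gives $\x_R(\li_f M)\cong\li_g(\syz^r M)$ in $\ucm(R)$. **The main obstacle** I anticipate is the free-summand bookkeeping in the first step: carefully arguing that $\syz^r f$, after removing the unavoidable free summand of $\syz^r P$, really is a surjection with kernel stably equal to $\syz^r K$, and that the induced $g$ does not depend on choices in a way that breaks the computation — this is the "up to free summands" hedge in the hypothesis "$g$ is a perfect morphism of $\syz^r M$ induced by $\syz^r f$," and making it precise (likely via comparing minimal resolutions and using that a free direct summand of a syzygy can be split off without affecting the higher syzygies up to free summands) is the technical heart of the argument.
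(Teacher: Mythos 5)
Your overall strategy is the same as the paper's: pass to the $r$-th syzygy to get a perfect morphism $g$ of $\syz^rM$ with kernel $\syz^rK$ (where $K=\Ker f$), then dualize the minimal free resolution of $K$ into $\omega$, using $\Ext_R^j(K,\omega)=0$ for $j<r$, to produce the four-term data $0\to\omega^{\oplus m_0}\to\cdots\to\omega^{\oplus m_{r-1}}\to\Hom_R(\syz^rK,\omega)\to\li_fM\to0$, split it at $Z$, and invoke Remark \ref{mcmappr}. However, there are two genuine problems with your writeup. First, the step you yourself flag as ``the technical heart'' is not carried out, and the fix you sketch goes in the wrong direction: discarding a free summand of $\syz^rP$ cannot help, because $\syz^rf:\syz^rP\to\syz^rM$ (computed from minimal resolutions) need not be \emph{surjective} at all; in general one must \emph{enlarge} the domain. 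The paper resolves this by applying the horseshoe construction to minimal free resolutions of $K$ and $M$ and taking $r$-th syzygies of the short exact sequence, which yields an exact sequence $0\to\syz^rK\to\syz^rP\oplus F\xrightarrow{g}\syz^rM\to0$ with $F$ free and $g=(\syz^rf,\,h)$; the depth lemma shows all terms are MCM, and $\pd_RP=r$ forces $\syz^rP$ to be free, so $g$ is a perfect morphism whose kernel is \emph{exactly} $\syz^rK$, not merely stably. That exact identification is what you need: if $\Ker g$ were only $\syz^rK$ up to free summands, the displayed exact sequences of the theorem (with $Z$ resolved by copies of $\omega$) would be perturbed by extra $\omega$-summands in the non-Gorenstein case, and you would only recover the conclusion in $\ucm(R)$, which is weaker than the statement.

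Second, you silently impose $R$ Gorenstein and $\omega\cong R$, but the theorem is stated over a Cohen--Macaulay local ring with canonical module (the Gorenstein hypothesis enters only from Corollary \ref{genyi} onward). The Gorenstein detour is unnecessary: $\Hom_R(R^{\oplus n},\omega)\cong\omega^{\oplus n}$ directly, and $Z$ has finite injective dimension because it admits a finite resolution by copies of $\omega$, which has finite injective dimension over a CM local ring --- your alternative argument ``$\pd_RZ<\infty$ hence $\operatorname{id}_RZ<\infty$'' is false outside the Gorenstein case, and so is the claim that $\omega$-dualizing a free summand of $\Ker g$ produces a free summand. A minor slip in the same vein: since $\syz^rM$ is MCM (codimension $0$), its perfect link is $\li_g(\syz^rM)=\Ext_R^0(\Ker g,\omega)=\Hom_R(\Ker g,\omega)$, not $\Ext_R^r(\Ker g,\omega)$ as you first write (your subsequent computation uses the correct $\Hom$). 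With the horseshoe argument for $g$ and the $\omega$-dual bookkeeping done without the Gorenstein reduction, your outline becomes the paper's proof.
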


\begin{proof}
Taking the kernel of $f$, we have an exact sequence $0 \to K \to P \xrightarrow{f} M \to 0$.
According to Proposition \ref{basli}(1), all the modules appearing in this sequence are CM of codimension $r$.
Taking the $r$th syzygies gives rise to an exact sequence
$$
0 \to \syz^rK \to \syz^rP\oplus F \xrightarrow{g} \syz^rM \to 0
$$
of $R$-modules, where $F$ is free and $g$ is of the form $(\syz^rf, h)$ with $h\in\Hom_R(F,\syz^rM)$.
The depth lemma implies that this is an exact sequence of MCM $R$-modules (hence $\syz^rP$ is free), and $g$ is a perfect morphism of $\syz^rM$.
We thus have $\li_g(\syz^rM)=\Hom_R(\syz^rK,\omega)$.
Take a minimal free resolution
$$
\cdots \to R^{\oplus m_j} \xrightarrow{A_j} R^{\oplus m_{j-1}} \to \cdots \to R^{\oplus m_1} \xrightarrow{A_1} R^{\oplus m_0} \to K \to 0
$$
of $K$ over $R$, where each $A_j$ is a matrix with entries in $\m$.
As $f$ is a perfect morphism and $K$ is a CM module of codimension $r$, the module $\Ext_R^j(K,\omega)$ vanishes for $j<r$ and is isomorphic to $\li_fM$ for $j=r$.
Hence, dualizing the above resolution by $\omega$ yields an exact sequence
$$
0 \to \omega^{\oplus m_0} \xrightarrow{{}^t\!A_1} \omega^{\oplus m_1} \to \cdots \to \omega^{\oplus m_{r-2}} \xrightarrow{{}^t\!A_{i-1}} \omega^{\oplus m_{r-1}} \xrightarrow{\psi} \li_g(\syz^rM) \xrightarrow{\phi} \li_fM \to 0.
$$
Letting $Z$ be the image of $\psi$, we obtain exact sequences as in the theorem.
The last assertion of the theorem follows from Remark \ref{mcmappr}.
\end{proof}

We recall the definition of the Yoshino--Isogawa link of a MCM module \cite{YI}.

\begin{dfn}[Yoshino--Isogawa]
Let $R$ be a Gorenstein local ring.
Then the assignment $M\mapsto\Hom_R(\syz M,R)$ makes an additive contravariant functor
$$
\link_R:\lcm(R)\to\lcm(R),
$$
which is in fact a duality.
For each MCM $R$-module, we call $\link_RM$ the {\em Yoshino--Isogawa link} of $M$.
\end{dfn}

Applying our Theorem \ref{mg} to a Gorenstein local ring, we obtain the following result on the relationships between MCM approximations, Yoshino--Isogawa links, syzygies, transposes and our perfect links.

\begin{cor}\label{genyi}
Let $R$ be a Gorenstein local ring.
Let $M$ be a CM $R$-module of codimension $r$ with a perfect morphism $f:P\to M$.
Then one has stable equivalences
$$
\x_R\li_fM\scong\link_R\syz^rM,\qquad\syz^r\li_fM\scong\syz^{r+1}\tr\syz^rM.
$$
\end{cor}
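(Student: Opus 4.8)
The plan is to deduce both stable equivalences from Theorem \ref{mg} (and the exact sequences produced in its proof), exploiting the special features of the Gorenstein case: there $\omega=R$, the categories $\ucm(R)$ and $\lcm(R)$ coincide, and an isomorphism in $\ucm(R)$ is precisely a stable equivalence.

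First I would establish $\x_R\li_fM\scong\link_R\syz^rM$. Write $K=\Ker f$. The proof of Theorem \ref{mg} produces a perfect morphism $g$ of $\syz^rM$ sitting in a short exact sequence $0\to\syz^rK\to\syz^rP\oplus F\to\syz^rM\to0$ with $\syz^rP\oplus F$ free, together with the identification $\li_g(\syz^rM)=\Hom_R(\syz^rK,\omega)=(\syz^rK)^*$. Since $\syz^rK$ is thus a first syzygy of $\syz^rM$, Schanuel's lemma gives $\syz^rK\scong\syz^{r+1}M$, whence $\li_g(\syz^rM)\scong(\syz^{r+1}M)^*=\link_R\syz^rM$. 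Combining this with the isomorphism $\x_R\li_fM\cong\li_g(\syz^rM)$ in $\ucm(R)=\lcm(R)$ from Theorem \ref{mg} yields the first equivalence.

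For the second equivalence I would route through the first. By Proposition \ref{basli}(1) the module $\li_fM$ is CM of codimension $r$, so $d-\depth\li_fM=r$ and Remark \ref{mcmappr} gives $\x_R\li_fM\scong\syz^{-r}\syz^r\li_fM$; applying the autoequivalence $\syz^r$ of $\lcm(R)$ to the first equivalence we obtain $\syz^r\li_fM\scong\syz^r\x_R\li_fM\scong\syz^r\link_R\syz^rM=\syz^r\bigl((\syz^{r+1}M)^*\bigr)$. It then remains to identify $(\syz N)^*$ with $\syz\tr N$ for the MCM module $N=\syz^rM$: taking a minimal presentation $F_1\xrightarrow{\partial}F_0\to N\to0$, the definition of the transpose gives $\syz\tr N\cong F_0^*/N^*$, and dualizing $0\to\syz N\to F_0\to N\to0$ together with the vanishing $\Ext_R^1(N,R)=0$ (valid because $N$ is MCM over a Gorenstein ring) gives $(\syz N)^*\cong F_0^*/N^*$ as well. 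Therefore $\syz^r\bigl((\syz^{r+1}M)^*\bigr)\scong\syz^r\bigl(\syz\tr\syz^rM\bigr)=\syz^{r+1}\tr\syz^rM$, which is the second equivalence. (Alternatively, one can bypass Remark \ref{mcmappr}: over a Gorenstein ring the module $Z$ in Theorem \ref{mg} has $\pd_RZ\le r-1$ since it is resolved by copies of $\omega=R$, so an iterated horseshoe argument applied to $0\to Z\to\li_g(\syz^rM)\to\li_fM\to0$, combined with $\Ext_R^1(\syz^r\li_fM,R)=0$, shows that taking $r$th syzygies collapses $Z$ and gives $\syz^r\li_fM\scong\syz^r\li_g(\syz^rM)$ directly.)

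Since Theorem \ref{mg} and Remark \ref{mcmappr} already carry the substantive content, I do not anticipate a real obstacle here; the only care needed is in tracking the ``up to free summands'' ambiguities (e.g.\ that $\syz^rK$ is only stably a first syzygy of $\syz^rM$) and in recording the standard identity $\syz\tr N\scong N^*$ for MCM modules $N$ over a Gorenstein ring, which hinges on $\Ext_R^1(N,R)=0$.
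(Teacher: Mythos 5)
Your argument is correct and takes essentially the same route as the paper's own proof: both deduce the first equivalence from Theorem \ref{mg} together with $\syz^rK\scong\syz^{r+1}M$ (since the middle term of the induced sequence is free), and both obtain the second by applying $\syz^r$ and invoking Remark \ref{mcmappr}. The only difference is that you spell out the standard identification $(\syz N)^*\scong\syz\tr N$ for MCM $N$ over a Gorenstein ring, which the paper uses tacitly.
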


\begin{proof}
Using Theorem \ref{mg}, we have an exact sequence $0 \to \syz^rK \to \syz^rP\oplus F\xrightarrow{g}\syz^rM\to0$ with $F$ free, and an isomorphism $\x_R(\li_fM)\cong\li_g(\syz^rM)$ in $\ucm(R)=\lcm(R)$.
As $\syz^rP$ is a free $R$-module, we have $\syz^rK\scong\syz^{r+1}M$.
Hence it holds that $\li_g(\syz^rM)=\Hom_R(\syz^rK,R)\scong\Hom_R(\syz^{r+1}M,R)=\link_R(\syz^rM)$, which gives the first isomorphism.
Since $R$ is Gorenstein, $\x_RN$ is stably equivalent to $\syz^{-r}\syz^rN$ for each $N\in\cm^r(R)$ by Remark \ref{mcmappr}.
Taking the $r$th syzygy of the first isomorphism gives the second one.
\end{proof}

We immediately deduce the following corollary, which is the main result of \cite{YI}.

\begin{cor}[Yoshino--Isogawa]\label{coryi}
Let $R$ be a Gorenstein local ring, and let $\xx=x_1,\dots,x_r$ be an $R$-sequence.
Then there is a diagram of functors
$$
\begin{CD}
\lcm(R/\xx R) @>{\syz_R^r}>> \lcm(R) \\
@VV{\link_{R/\xx R}}V @VV{\link_R}V \\
\lcm(R/\xx R) @>{\x_R}>> \lcm(R)
\end{CD}
$$
which is commutative up to isomorphism.
\end{cor}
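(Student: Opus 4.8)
The plan is to read the square off Corollary \ref{genyi}, after exhibiting for each MCM $R/\xx R$-module $\bar M$ a perfect morphism over $R$ whose perfect link is exactly $\link_{R/\xx R}\bar M$. So I would fix a MCM $R/\xx R$-module $\bar M$, which is a CM $R$-module of codimension $r$ by Proposition \ref{cmrmcm}(1) (and $\syz_R^r\bar M$ is MCM over $R$ by the usual depth count, so both composites in the square land in $\lcm(R)$). Choose a minimal $R/\xx R$-linear epimorphism $f\colon P\to\bar M$ with $P$ free over $R/\xx R$; by Remark \ref{doko} this $f$ is a perfect morphism of $\bar M$ over $R$, and $\Ker f=\syz_{R/\xx R}\bar M$ by the definition of the syzygy over $R/\xx R$.

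The main step is the identification of $\li_f\bar M$. Since $R$ is Gorenstein we take $\omega=R$, so $R/\xx R$ is Gorenstein and $\omega/\xx\omega\cong R/\xx R$; as $\xx$ annihilates $\Ker f$, the change-of-rings isomorphism used in the proof of Proposition \ref{qres}(2) becomes
$$
\li_f\bar M=\Ext_R^r(\Ker f,R)\cong\Hom_{R/\xx R}(\Ker f,R/\xx R)=\link_{R/\xx R}\bar M .
$$
Now Corollary \ref{genyi}, applied to $M=\bar M$ with this $f$, gives $\x_R\li_f\bar M\scong\link_R\syz_R^r\bar M$; substituting the display yields $\x_R(\link_{R/\xx R}\bar M)\scong\link_R(\syz_R^r\bar M)$, which is the commutativity of the square on objects.

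It then remains to handle two routine points. First, the functors $\syz_R^r$ and $\x_R$ in the diagram must descend to the stable categories; the only nonformal input is that $\syz_R^r(R/\xx R)$ is free over $R$ (the Koszul complex on $\xx$ is a minimal free $R$-resolution of $R/\xx R$), so that $\x_R(R/\xx R)\scong\syz_R^{-r}\syz_R^r(R/\xx R)$ is stably trivial by Remark \ref{mcmappr}, and hence free $R/\xx R$-modules are carried to stably trivial $R$-modules by both functors. Second, one checks that the isomorphism above is natural in $\bar M$, which is bookkeeping: every map involved is induced by minimal free resolutions together with the standard (co)syzygy and $R$-dual functors. I do not expect any genuine obstacle here — the substantive work has already been done in Theorem \ref{mg} and Corollary \ref{genyi}, and the present corollary is essentially their translation through the change of rings $R\to R/\xx R$.
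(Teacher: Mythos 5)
Your proposal is correct and takes essentially the same route as the paper: the paper also fixes $\bar M\in\lcm(R/\xx R)$, regards it as a CM $R$-module of codimension $r$ via Proposition \ref{cmrmcm}(1), takes the perfect morphism $f$ of Remark \ref{doko} (a free $R/\xx R$-cover), and reads the square off Corollary \ref{genyi}. The only difference is that you spell out the change-of-rings identification $\li_f\bar M\cong\Hom_{R/\xx R}(\syz_{R/\xx R}\bar M,R/\xx R)=\link_{R/\xx R}\bar M$ (and the routine descent/naturality points), which the paper's terse proof leaves implicit.
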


\begin{proof}
Let $M\in\lcm(R/\xx R)$.
Proposition \ref{cmrmcm}(1) implies that $M$ is a CM $R$-module of codimension $r$.
Let $f$ be a perfect morphism as in Remark \ref{doko}.
The assertion follows from Corollary \ref{genyi}.
\end{proof}

Next we investigate perfect morphisms whose kernels are isomorphic.

\begin{prop}\label{2and3}
Let $M$ and $N$ be CM $R$-modules of codimension $r$.
\begin{enumerate}[\rm(1)]
\item
Let $f:P\to M$ and $g:Q\to N$ be perfect morphisms such that $\Ker f\cong\Ker g$.
Then $\syz^{r+1}M$ is stably equivalent to $\syz^{r+1}N$.
\item
Suppose $r\le1$.
If $\syz^rM$ is stably equivalent to $\syz^rN$, then there exist perfect morphisms $f,g$ of $M,N$ respectively, such that $\Ker f\cong\Ker g$.
\end{enumerate}
\end{prop}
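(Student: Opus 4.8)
The plan is to prove the two parts by quite different methods: part (1) is a matter of syzygy bookkeeping, while part (2) requires an explicit construction of perfect morphisms, carried out separately in the cases $r=0$ and $r=1$.

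For (1), take kernels to get exact sequences $0\to\Ker f\to P\to M\to0$ and $0\to\Ker g\to Q\to N\to0$ with $P,Q$ perfect of codimension $r$, so that $\pd_RP=\pd_RQ=r$ and the $r$th syzygies $\syz^rP,\syz^rQ$ are free (their minimal free resolutions have length $r$). Applying $\syz^r$ to the first sequence — by iterating the horseshoe lemma exactly as in the proof of Theorem \ref{mg} — yields an exact sequence $0\to\syz^r\Ker f\to\syz^rP\oplus F\to\syz^rM\to0$ with $F$ free; hence a free module surjects onto $\syz^rM$ with kernel $\syz^r\Ker f$, which gives $\syz^{r+1}M\scong\syz^r\Ker f$. (This stable equivalence is essentially the one recorded inside the proofs of Theorem \ref{mg} and Corollary \ref{genyi}.) Likewise $\syz^{r+1}N\scong\syz^r\Ker g$, and since $\Ker f\cong\Ker g$ we conclude $\syz^{r+1}M\scong\syz^{r+1}N$.

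For (2) with $r=0$ the hypothesis reads $M\scong N$ with $M,N$ MCM; I would take $f,g$ to be minimal free covers of $M,N$, so that $\Ker f=\syz_RM$ and $\Ker g=\syz_RN$. Since a minimal free resolution is unchanged in positive degrees upon adding a free summand, $M\oplus(\mathrm{free})\cong N\oplus(\mathrm{free})$ forces $\syz_RM\cong\syz_RN$, whence $\Ker f\cong\Ker g$. For $r=1$ the key observation is that $M,N$, having codimension one over the equidimensional ring $R$, are torsion; therefore $L:=\syz_RM\subseteq R^{\nu_R(M)}$ and $L':=\syz_RN\subseteq R^{\nu_R(N)}$ are torsion-free of ranks $\nu_R(M)$ and $\nu_R(N)$. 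The hypothesis $\syz_RM\scong\syz_RN$ gives an isomorphism $L\oplus R^a\cong L'\oplus R^b=:\widetilde L$, which is torsion-free of rank $\rho:=\nu_R(M)+a=\nu_R(N)+b$; adjoining the free summands $R^a,R^b$ to minimal free covers of $M,N$ produces exact sequences $0\to\widetilde L\xrightarrow{j}R^{\rho}\to M\to0$ and $0\to\widetilde L\xrightarrow{j'}R^{\rho}\to N\to0$. Now choose a single injective map $\alpha\colon R^{\rho}\hookrightarrow\widetilde L$ (possible because $\widetilde L$ is torsion-free of rank $\rho$, hence contains a free submodule of maximal rank), and set $P:=\operatorname{coker}(j\alpha\colon R^{\rho}\to R^{\rho})$ and $Q:=\operatorname{coker}(j'\alpha\colon R^{\rho}\to R^{\rho})$. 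Both $j\alpha$ and $j'\alpha$ are injective, non-surjective endomorphisms of $R^{\rho}$, so $P,Q$ are nonzero torsion modules with a length-one free resolution; being nonzero and torsion they are not free, so $\pd_RP=\pd_RQ=1$, and by Auslander--Buchsbaum they are CM of codimension one, i.e.\ perfect of codimension one. The surjections above induce perfect morphisms $f\colon P\to M$ and $g\colon Q\to N$, and a direct computation gives $\Ker f\cong j(\widetilde L)/j\alpha(R^{\rho})\cong\widetilde L/\alpha(R^{\rho})$ and, using the \emph{same} $\alpha$, $\Ker g\cong\widetilde L/\alpha(R^{\rho})$ as well; hence $\Ker f\cong\Ker g$.

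The main obstacle is concentrated in part (2) for $r=1$: the trick is to present $M$ and $N$ as cokernels of surjections from a \emph{common} free module $R^{\rho}$ with a \emph{common} kernel $\widetilde L$ (this is exactly where "codimension one implies torsion" is needed, to make the ranks match), and then to kill the \emph{same} free submodule $\alpha(R^{\rho})$ in both presentations, producing perfect modules with isomorphic kernels. The only ancillary points to check are the existence of a free submodule of maximal rank in $\widetilde L$ and that the square-matrix cokernels $P,Q$ are genuinely perfect of codimension one; both are routine. (A tempting alternative — reducing modulo a regular element $x\in\ann M\cap\ann N$ and working over $R/xR$ — runs into a harder change-of-rings comparison between $\syz_R$ and $\syz_{R/xR}$, so the direct construction seems preferable.)
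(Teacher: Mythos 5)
Your proof is correct and takes essentially the same route as the paper's: part (1) and the case $r=0$ coincide with it, and for $r=1$ your construction---stabilize to a common kernel $\widetilde{L}$, choose a maximal-rank free submodule $\alpha\colon R^{\rho}\hookrightarrow\widetilde{L}$, and take the cokernels of $j\alpha$ and $j'\alpha$---is exactly the paper's pushout construction, whose module $C=\syz M/\gamma(R^m)$ is your common kernel $\widetilde{L}/\alpha(R^{\rho})$ for the choice $\alpha=\gamma\oplus\mathrm{id}_{R^a}$. The only cosmetic difference is that you certify that $P,Q$ are perfect of codimension one via injectivity of the square matrices and the Auslander--Buchsbaum formula, whereas the paper first checks that $C$ is CM of codimension one and then invokes the extension-closedness of $\cm^1(R)$.
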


\begin{proof}
(1) There are exact sequences $0\to\Ker f\to P\xrightarrow{f}M\to0$ and $0\to\Ker g\to Q\xrightarrow{g}N\to0$.
Note that $\syz^rP$ and $\syz^rQ$ are free $R$-modules.
Applying $\syz^r$ to these exact sequences yields $\syz^{r+1}M\scong\syz^r\Ker f\cong\syz^r\Ker g\scong\syz^{r+1}N$.

(2) When $r=0$, the $R$-modules $M,N$ are MCM and we have an isomorphism $M\oplus R^a\cong N\oplus R^b$ for some $a,b\ge0$.
There are exact sequences $0\to\syz M\to R^m\xrightarrow{f}M\to0$ and $0\to\syz N\to R^n\xrightarrow{g}N\to 0$, and we have $\Ker f=\syz M=\syz(M\oplus R^a)\cong\syz(N\oplus R^b)=\syz N=\Ker g$.

Now let $r=1$.
Take exact sequences $0\to\syz M\xrightarrow{\alpha}R^m\to M\to0$ and $0\to\syz N\xrightarrow{\beta}R^n\to N\to0$.
As $M$ has positive grade, $\syz M$ has rank $m$.
Hence there exists an exact sequence $0\to R^m\xrightarrow{\gamma}\syz M\xrightarrow{\delta}C\to0$ such that $C$ has rank $0$.
Since $\syz M$ is stably equivalent to $\syz N$, we have an isomorphism $X:=\syz M\oplus R^a\cong\syz N\oplus R^b$ for some $a,b\ge0$.
The pushout diagram of $\alpha$ and $\delta$ gives rise to exact sequences
$$
0\to R^m\to R^m\to P\to0,\qquad0\to C\to P\xrightarrow{f}M\to0.
$$
Adding the identity maps of $R^a,R^b$ to $\gamma,\beta$ respectively makes exact sequences $0\to R^{m+a}\to X\xrightarrow{\varepsilon}C\to0$ and $0\to X\xrightarrow{\zeta}R^{n+b}\to N\to0$.
From the pushout diagram of $\varepsilon$ and $\zeta$ we get exact sequences
$$
0\to R^{m+a}\to R^{n+b}\to Q\to0,\qquad0\to C\to Q\xrightarrow{g}N\to0.
$$
As $C$ has rank $0$, it has positive grade, whence $\dim C\le d-1$.
Since $R^m$ and $\syz M$ are MCM modules, it is observed that $\depth C\ge d-1$.
Therefore $C$ is a CM module of codimension $1$.
Proposition \ref{qres}(1) implies that $P,Q$ are perfect $R$-modules of codimension $1$.
Thus $f,g$ are perfect morphisms of $M,N$ respectively, and we have $\Ker f\cong C\cong\Ker g$.
\end{proof}

\begin{ques}
Does Proposition \ref{2and3}(2) hold even when $r\ge2$\,?
\end{ques}

Finally we obtain the following result on double perfect linkage, which is the second main result of this section.

\begin{thm}\label{main}
Let $R$ be a Gorenstein local ring.
Let $M,N$ be CM $R$-modules of codimension $r$.
Let $f,g$ be perfect morphisms of $M,N$ respectively.
Consider the following four conditions:
\begin{center}
{\rm(1)} $\li_fM\cong\li_gN$,\qquad
{\rm(2)} $\Ker f\cong\Ker g$,\qquad
{\rm(3)} $\syz^rM\scong\syz^rN$,\qquad
{\rm(4)} $\syz^r\li_fM\scong\syz^r\li_gN$.
\end{center}
Then the implications ${\rm(1)}\Leftrightarrow{\rm(2)}\Rightarrow{\rm(3)}\Leftrightarrow{\rm(4)}$ hold.
If $r\le1$ and {\rm(3)} is satisfied, then {\rm(2)} is also satisfied for some perfect morphisms $f,g$ of $M,N$ respectively.
\end{thm}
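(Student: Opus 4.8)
The plan is to obtain the theorem by piecing together the results already established in this section; essentially no new argument beyond bookkeeping in the stable category is required. The equivalence $(1)\Leftrightarrow(2)$ is nothing but Corollary \ref{1and2}, which rests on the duality $\Ext_R^r(-,\omega)$ of $\cm^r(R)$ from Proposition \ref{qres}(2): a perfect link is determined up to isomorphism by the kernel of the perfect morphism, and conversely.

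For $(2)\Rightarrow(3)$, I would invoke Proposition \ref{2and3}(1), which yields $\syz^{r+1}M\scong\syz^{r+1}N$ from $\Ker f\cong\Ker g$. Since $M,N\in\cm^r(R)$ have depth $d-r$, their $r$th syzygies $\syz^rM$ and $\syz^rN$ are MCM, hence objects of $\lcm(R)$; because $R$ is Gorenstein, the syzygy functor $\syz$ is an autoequivalence of $\lcm(R)$, so applying its quasi-inverse $\syz^{-1}$ to $\syz(\syz^rM)\scong\syz(\syz^rN)$ gives $\syz^rM\scong\syz^rN$. (For $r=0$ this reads $M\scong N$, as it must, since a perfect morphism from a codimension-$0$ perfect module is a surjection from a free module.)

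For $(3)\Leftrightarrow(4)$, the key input is the second stable equivalence of Corollary \ref{genyi}: $\syz^r\li_fM\scong\syz^{r+1}\tr\syz^rM$ and likewise $\syz^r\li_gN\scong\syz^{r+1}\tr\syz^rN$, all MCM. The assignment $X\mapsto\syz^{r+1}\tr X$ is the composite of the duality $\tr$ with the autoequivalence $\syz^{r+1}$ of $\lcm(R)$, hence a duality of $\lcm(R)$, and a duality reflects isomorphisms; applying it to $X=\syz^rM$ and $X=\syz^rN$ shows that $(4)$ is equivalent to $\syz^rM\scong\syz^rN$, which is $(3)$. Finally, the last assertion — that for $r\le1$ condition $(3)$ forces $(2)$ for suitable perfect morphisms $f,g$ — is precisely Proposition \ref{2and3}(2).

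I do not expect a genuine obstacle here, since all the substance has been isolated in Corollaries \ref{1and2} and \ref{genyi} and Proposition \ref{2and3}. The only points requiring care are: (i) that conditions $(3)$ and $(4)$ involve $\syz^r$ rather than $\syz^{r+1}$, so one must genuinely use the invertibility of $\syz$ on $\lcm(R)$ — which is where the Gorenstein hypothesis enters — to strip off the extra syzygy; and (ii) that every module being compared, namely the $r$th syzygies of $M$, $N$, $\li_fM$, $\li_gN$ and the transposes $\tr\syz^rM$, $\tr\syz^rN$, lies in $\lcm(R)$ so that the stable-category formalism applies, which follows from the codimension-$r$ Cohen--Macaulay hypotheses together with the depth lemma.
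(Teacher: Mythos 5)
Your proposal is correct and is assembled from exactly the same ingredients as the paper's proof: Corollary \ref{1and2} for ${\rm(1)}\Leftrightarrow{\rm(2)}$, Corollary \ref{genyi} together with the duality $\syz^{r+1}\tr$ of $\lcm(R)$ for ${\rm(3)}\Leftrightarrow{\rm(4)}$, and Proposition \ref{2and3}(2) for the final assertion. The only divergence is in how you reach ${\rm(2)}\Rightarrow{\rm(3)}$: you invoke Proposition \ref{2and3}(1) to get $\syz^{r+1}M\scong\syz^{r+1}N$ and then strip one syzygy using that $\syz$ is an autoequivalence of $\lcm(R)$ over the Gorenstein ring $R$, whereas the paper simply observes that ${\rm(1)}\Rightarrow{\rm(4)}$ is immediate (apply $\syz^r$ to the isomorphism $\li_fM\cong\li_gN$) and then deduces ${\rm(2)}\Rightarrow{\rm(1)}\Rightarrow{\rm(4)}\Rightarrow{\rm(3)}$ from the already proved equivalences. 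Both routes are valid; the paper's is marginally more economical since it needs neither Proposition \ref{2and3}(1) nor the cosyzygy step, while yours has the small merit of working the statement of Proposition \ref{2and3}(1) explicitly into the logical chain, and your attention to the points where the Gorenstein hypothesis and the MCM property of the compared modules are used is exactly where the care is needed.
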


\begin{proof}
The conditions (1) and (2) are equivalent by Corollary \ref{1and2}.
It is clear that (1) implies (4).
Using Corollary \ref{genyi} and the fact that $\syz^{r+1}\tr:\lcm(R)\to\lcm(R)$ is a duality, we see that (3) is equivalent to (4).
The first assertion now follows.
The second assertion is shown by Proposition \ref{2and3}(2).
\end{proof}

\section{Applications}

In this section, first we apply our results obtained in the preceding sections to classical CI-linkage to investigate when two Cohen--Macaulay ideals $I,J$ of codimention $r$ to be CI-linked.
Next we prove that in codimension one every perfect link is either a double one or a triple one, and establish criteria for two CM modules to be doubly and triply perfectly linked.
We also construct a lot of examples of perfect linkage of modules and CI-linkage of ideals.

Let $I$ be an ideal of $R$.
The {\em codimension} $\codim I$ of $I$ is by definition the codimension of the $R$-module $R/I$, i.e., $\codim I=\dim R-\dim R/I$.
We say that $I$ is a {\em CM} ideal if the residue ring $R/I$ is a CM ring.
Hence $I$ is a CM ideal of codimension $r$ if and only if $R/I$ belongs to $\cm^r(R)$.

Let $I,J$ be proper ideals of $R$.
We say that $I$ is {\em directly CI-linked} to $J$ if there exists an $R$-sequence $\xx=x_1,\dots,x_r$ in $I\cap J$ such that $I=(\xx R:J)$ and $J=(\xx R:I)$.
Note that in this case $I,J$ have codimension $r$.
Note also that if $R$ is Gorenstein, then the equality $I=(\xx R:J)$ implies the equality $J=(\xx R:I)$ and vice versa; see \cite[Exercise 3.2.15]{BH}.
We say that $I$ is {\em doubly CI-linked} to $J$ if there is an ideal $K$ of $R$ to which $I,J$ are directly CI-linked.
Also, $I$ is said to be {\em CI-linked} to $J$ if there is a sequence of ideals $I_0,\dots,I_n$ with $I_0=I$ and $I_n=J$ such that $I_i$ is directly CI-linked to $I_{i+1}$ for all $0\le i\le n-1$.

We start by stating the proposition below, which guarantees that the classical notion of CI-linkage of CM ideals is a special case of our notion of perfect linkage of CM modules over perfect modules.

\begin{prop}\label{liid}
Let $R$ be a Gorenstein local ring.
\begin{enumerate}[\rm(1)]
\item
Let $I$ be a CM ideal of $R$ with codimension $r$.
Let $\xx=x_1,\dots,x_r$ be an $R$-sequence in $I$.
Then for the natural surjection $f:R/\xx R\to R/I$ one has $\Ker f=I/\xx R$ and $\li_f(R/I)\cong R/(\xx R:I)$.
\item
Let $I,J$ be CM ideals of $R$ with codimension $r$.
If $I$ is CI-linked (respectively, directly CI-linked, doubly CI-linked) to $J$, then $R/I$ is perfectly linked (respectively, directly perfectly linked, doubly perfectly linked) to $R/J$.
\end{enumerate}
\end{prop}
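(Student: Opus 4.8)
The plan is to establish (1) by a direct computation of the perfect link, and then deduce (2) as a formal consequence of (1) together with the basic properties of perfect links.

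For (1), I would first check that $f$ really is a perfect morphism: the Koszul complex on $\xx$ is a free resolution of $R/\xx R$ of length $r$, and $\grade_R(R/\xx R)=r$ since $\xx$ is an $R$-sequence, so $R/\xx R\in\perf^r(R)$; as $f$ is surjective with $\Ker f=I/\xx R$, it is a perfect morphism of $R/I$. To compute $\li_f(R/I)=\Ext_R^r(I/\xx R,\omega)$, I would apply the duality $\Ext_R^r(-,\omega)$ of Proposition \ref{qres}(2) to the exact sequence
$$
0\to I/\xx R\to R/\xx R\xrightarrow{f}R/I\to0 ,
$$
whose terms all lie in $\cm^r(R)$ by Proposition \ref{qres}(1); this produces a short exact sequence $0\to\Ext_R^r(R/I,\omega)\to\Ext_R^r(R/\xx R,\omega)\to\li_f(R/I)\to0$. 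From the self-dual Koszul resolution one gets $\Ext_R^r(R/\xx R,\omega)\cong\omega/\xx\omega\cong R/\xx R$ (the second isomorphism because $R$ is Gorenstein), and the standard change-of-rings isomorphism $\Ext_R^r(R/I,\omega)\cong\Hom_{R/\xx R}(R/I,\omega/\xx\omega)$ identifies the displayed injection with the inclusion of $\ann_{R/\xx R}(I/\xx R)=(\xx R:I)/\xx R$ into $R/\xx R$. Passing to cokernels then gives $\li_f(R/I)\cong (R/\xx R)\big/\bigl((\xx R:I)/\xx R\bigr)\cong R/(\xx R:I)$, as claimed. (Equivalently, one can argue entirely over the Gorenstein ring $R/\xx R$, where $I/\xx R$, $R/I$, $R/\xx R$ are all MCM by Proposition \ref{cmrmcm}(1) and $\Ext_{R/\xx R}^{>0}(R/I,R/\xx R)=0$, and read the identification off the long exact sequence obtained by applying $\Hom_{R/\xx R}(-,R/\xx R)$ to the above sequence.)

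For (2), the direct case is immediate from (1): if $\xx\subseteq I\cap J$ is an $R$-sequence with $I=(\xx R:J)$ and $J=(\xx R:I)$, then the natural surjection $f:R/\xx R\to R/I$ is a perfect morphism with $\li_f(R/I)\cong R/(\xx R:I)=R/J$, so $R/I\sim R/J$. For the doubly CI-linked case, let $K$ be an ideal to which both $I$ and $J$ are directly CI-linked; applying the direct case to $I$ gives $R/I\sim R/K$ and, along the way, $R/K\cong\li_f(R/I)\in\cm^r(R)$, so $K$ is a CM ideal of codimension $r$; applying the direct case again to the (symmetric) link between $K$ and $J$ yields $R/K\sim R/J$, whence $R/I\sim R/K\sim R/J$, i.e.\ $R/I$ is doubly perfectly linked to $R/J$. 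Finally, for the CI-linked case, I would induct along a chain $I=I_0,\dots,I_n=J$ with each $I_i$ directly CI-linked to $I_{i+1}$: each $I_i$ is automatically a CM ideal of codimension $r$, since $I_0$ is by hypothesis and, inductively, $R/I_{i+1}\cong\li_{f_i}(R/I_i)$ lies in $\cm^r(R)$ by Proposition \ref{basli}(1); hence the direct case applies at every step and gives $R/I=R/I_0\sim R/I_1\sim\cdots\sim R/I_n=R/J$.

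All the substance is in (1); part (2) is bookkeeping once (1) and the elementary properties of perfect links are in hand. The step I expect to be the main obstacle is making the change-of-rings identification in (1) precise enough to recognize the image of $\Ext_R^r(R/I,\omega)$ inside $\Ext_R^r(R/\xx R,\omega)\cong R/\xx R$ as exactly the colon ideal $(\xx R:I)/\xx R$; the Koszul computation, the vanishing of higher $\Ext$ over $R/\xx R$, and the inductive Cohen--Macaulayness of the $I_i$ in part (2) are all routine.
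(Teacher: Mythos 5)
Your proposal is correct and takes essentially the same approach as the paper: both arguments dualize the exact sequence $0\to I/\xx R\to R/\xx R\xrightarrow{f}R/I\to0$ and identify the dual of $R/I$ with the colon ideal $(\xx R:I)/\xx R$ inside $R/\xx R$, so that $\li_f(R/I)$ appears as the cokernel $R/(\xx R:I)$; indeed your parenthetical alternative (working over the Gorenstein ring $R/\xx R$ and using $\Ext_{R/\xx R}^{>0}(R/I,R/\xx R)=0$) is literally the paper's proof. For (2) the paper just says it follows immediately from (1); your explicit induction along the chain, checking that each intermediate $I_i$ is CM of codimension $r$ via Proposition \ref{basli}(1), is the same reduction with the bookkeeping spelled out.
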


\begin{proof}
(1) It is trivial that the kernel of $f$ is $I/\xx R$, and we get an exact sequence $0\to I/\xx R\to R/\xx R\xrightarrow{f} R/I\to0$.
It follows that $\li_f(R/I)=\Ext_R^r(I/\xx R,R)\cong\Hom_{R/\xx R}(I/\xx R,R/\xx R)$.
Note that $R/I$ is a MCM module over the Gorenstein local ring $R/\xx R$.
Applying $\Hom_{R/\xx R}(-,R/\xx R)$ induces an exact sequence $0\to\Hom_{R/\xx R}(R/I,R/\xx R)\to R/\xx R\to\li_f(R/I)\to0$.
Since $\Hom_{R/\xx R}(R/I,R/\xx R)\cong(0:_{R/\xx R}I/\xx R)=(\xx R:_RI)/\xx R$, we obtain $\li_f(R/I)\cong R/(\xx R:_RI)$.

(2) The assertion immediately follows from (1).
\end{proof}

Using the above Proposition \ref{liid}, one deduces from Theorem \ref{main} the following two propositions.

\begin{prop}\label{ixjy}
Suppose that $R$ is a Gorenstein local ring.
Let $I,J$ be CM ideals of $R$ with codimension $r$.
The following are equivalent.
\begin{enumerate}[\rm(1)]
\item
The ideals $I$ and $J$ are doubly CI-linked.
\item
There exist $R$-sequences $\xx=x_1,\dots,x_r$ in $I$ and $\yy=y_1,\dots,y_r$ in $J$ such that $I/\xx R\cong J/\yy R$ as $R$-modules.
\end{enumerate}
\end{prop}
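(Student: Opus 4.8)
The plan is to reduce everything to the module-theoretic picture via Proposition~\ref{liid} and then apply Theorem~\ref{main}. Recall first that $R/I$ and $R/J$ lie in $\cm^r(R)$, and that for an $R$-sequence $\xx=x_1,\dots,x_r$ contained in $I$ the natural surjection $f:R/\xx R\to R/I$ is a perfect morphism with $\Ker f=I/\xx R$ and $\li_f(R/I)\cong R/(\xx R:I)$ by Proposition~\ref{liid}(1); the same applies to $J$ with an $R$-sequence $\yy=y_1,\dots,y_r$ in $J$. The whole argument then rests on Corollary~\ref{1and2} (equivalently, the equivalence ${\rm(1)}\Leftrightarrow{\rm(2)}$ of Theorem~\ref{main}): isomorphic kernels of perfect morphisms are detected by isomorphic perfect links, and conversely.

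For ${\rm(1)}\Rightarrow{\rm(2)}$, suppose $I$ and $J$ are both directly CI-linked to an ideal $K$, via $R$-sequences $\xx$ in $I\cap K$ and $\yy$ in $J\cap K$, so that $K=(\xx R:I)=(\yy R:J)$. Taking the natural surjections $f:R/\xx R\to R/I$ and $g:R/\yy R\to R/J$, Proposition~\ref{liid}(1) gives $\li_f(R/I)\cong R/K\cong\li_g(R/J)$, and Corollary~\ref{1and2} then yields $\Ker f\cong\Ker g$, that is, $I/\xx R\cong J/\yy R$.

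For ${\rm(2)}\Rightarrow{\rm(1)}$, start from $R$-sequences $\xx$ in $I$ and $\yy$ in $J$ with $I/\xx R\cong J/\yy R$. The natural surjections $f:R/\xx R\to R/I$ and $g:R/\yy R\to R/J$ are perfect morphisms with $\Ker f=I/\xx R\cong J/\yy R=\Ker g$, so Corollary~\ref{1and2} gives $\li_f(R/I)\cong\li_g(R/J)$; by Proposition~\ref{liid}(1) this says $R/(\xx R:I)\cong R/(\yy R:J)$ as $R$-modules, and comparing annihilators of these cyclic modules forces $(\xx R:I)=(\yy R:J)=:K$. It then remains to check that $I$ is directly CI-linked to $K$ via $\xx$ (and symmetrically $J$ to $K$ via $\yy$): the inclusion $\xx\subseteq I\cap K$ is immediate since $\xx\subseteq I$ and $x_iI\subseteq\xx R$; the equality $K=(\xx R:I)$ holds by definition; and $(\xx R:K)=I$ is the classical double-linkage identity, valid because $R$ is Gorenstein and $\xx$ is a maximal $R$-sequence in the CM ideal $I$. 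Alternatively, one can recover $(\xx R:K)=I$ internally: apply Proposition~\ref{basli}(3) to $f$ to obtain a perfect morphism $g'$ of $\li_f(R/I)\cong R/K$ with $\li_{g'}(R/K)\cong R/I$, identify $g'$ with a natural surjection $R/\xx R\to R/K$ (legitimate since $\xx\subseteq K$), and apply Proposition~\ref{liid}(1) once more. Since over the Gorenstein ring $R$ the equality $K=(\xx R:I)$ is equivalent to $I=(\xx R:K)$ (cf.\ \cite[Exercise 3.2.15]{BH}), this completes the verification.

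Most of the work is mechanical translation between ideals and cyclic modules; the one point that is not pure bookkeeping is the double-linkage identity $(\xx R:(\xx R:I))=I$, so I expect pinning that down — either by a clean citation to Peskine--Szpiro or by the internal derivation sketched above — to be the main thing to get right. A secondary point to be careful about is that the two $R$-sequences $\xx$ and $\yy$ occurring in condition ${\rm(2)}$ are genuinely allowed to differ, so one should not attempt to force a common sequence.
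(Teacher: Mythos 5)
Your proposal is correct and follows essentially the same route as the paper: translate via Proposition~\ref{liid}(1), apply the equivalence $\li_f\cong\li_g\Leftrightarrow\Ker f\cong\Ker g$ (Corollary~\ref{1and2}, i.e.\ Theorem~\ref{main}), compare annihilators of the cyclic links to get equality of the colon ideals, and invoke the Gorenstein identity $(\xx R:(\xx R:I))=I$ exactly as the paper does via \cite[Exercise 3.2.15]{BH}. Your extra internal derivation of that identity through Proposition~\ref{basli}(3) is a harmless (and valid) addition, but not a genuinely different argument.
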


\begin{proof}
Let $\xx=x_1,\dots,x_r$ and $\yy=y_1,\dots,y_r$ be $R$-sequences in $I$ and $J$, respectively.
Put $K=(\xx R:I)$ and $L=(\yy R:J)$.
Since $R$ is Gorenstein, $I$ and $J$ are directly CI-linked to $K$ and $L$, respectively.
Let $f:R/\xx R\to R/I$ and $g:R/\yy R\to R/J$ be the natural surjections.
By Proposition \ref{liid}(1) we have
$$
\Ker f=I/\xx R,\quad \Ker g=J/\yy R,\quad \li_f(R/I)\cong R/K,\quad \li_g(R/J)\cong R/L.
$$

Assume $K=L$.
Then Theorem \ref{main} says that $I/\xx R$ is isomorphic to $J/\yy R$.
This shows that (1) implies (2).
Conversely, suppose $I/\xx R\cong J/\yy R$.
It then follows from Theorem \ref{main} that $R/K$ is isomorphic as an $R$-module to $R/L$.
Taking the annihilators over $R$ implies $K=L$.
This shows that (2) implies (1).
\end{proof}

\begin{prop}\label{ic1}
Suppose that $R$ is Gorenstein.
Let $I,J$ be CM ideals of $R$ with codimension one.
The following are equivalent.
\begin{enumerate}[\rm(1)]
\item
The ideal $I$ is doubly CI-linked to $J$.
\item
The $R$-module $I/xR$ is isomorphic to $J/yR$ for some nonzerodivisors $x\in I$ and $y\in J$.
\item
The $R$-module $I$ is stably equivalent to $J$.
\item
The $R$-module $(xR:I)$ is stably equivalent to $(yR:J)$ for some nonzerodivisors $x\in I$ and $y\in J$.
\end{enumerate}
\end{prop}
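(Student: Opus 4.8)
The plan is to deduce all the equivalences from Proposition~\ref{ixjy} and Theorem~\ref{main}; the only point requiring extra care is the implication (3)$\Rightarrow$(2), which I would settle with a small cancellation argument.

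Since $r=1$, an $R$-sequence of length one is just a nonzerodivisor, so (1)$\iff$(2) is exactly Proposition~\ref{ixjy} in the case $r=1$. For the remaining conditions I would fix nonzerodivisors $x\in I$ and $y\in J$ (which exist because $I$ and $J$ have grade one) and use the natural surjections $f\colon R/xR\to R/I$ and $g\colon R/yR\to R/J$. As $R/xR$ and $R/yR$ are perfect of codimension one, $f$ and $g$ are perfect morphisms, and Proposition~\ref{liid}(1) gives $\Ker f=I/xR$, $\Ker g=J/yR$, $\li_f(R/I)\cong R/(xR:I)$ and $\li_g(R/J)\cong R/(yR:J)$. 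Using $\syz_R(R/I)=I$, $\syz_R(R/J)=J$ and $\syz_R(R/K)\scong K$ for any ideal $K$, Theorem~\ref{main} (with $r=1$) applied to $f$ and $g$ says: $I/xR\cong J/yR$ is equivalent to $R/(xR:I)\cong R/(yR:J)$; this implies $I\scong J$; and $I\scong J$ is equivalent to $(xR:I)\scong(yR:J)$. Hence (2)$\Rightarrow$(3) follows at once, and so does (3)$\iff$(4), once one notes that the condition $I\scong J$ is independent of the choice of $x,y$ (so it yields (4) for every choice, and conversely a pair witnessing (4) forces $I\scong J$).

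It remains to prove (3)$\Rightarrow$(2). Being grade-one ideals of the Cohen--Macaulay local ring $R$, both $I$ and $J$ are torsion-free of rank one, so from a stable isomorphism $I\oplus F\cong J\oplus G$ with $F,G$ free, comparison of ranks forces $F\cong G$, and cancellation of free summands over a local ring yields a genuine isomorphism $\theta\colon I\xrightarrow{\ \sim\ }J$. For any nonzerodivisor $x\in I$ the element $y:=\theta(x)$ is then a nonzerodivisor in $J$ (from $ry=0$ one gets $\theta(rx)=0$, hence $rx=0$, hence $r=0$), and $\theta$ carries $xR$ onto $yR$, so it descends to an isomorphism $I/xR\cong J/yR$, which is (2).

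The step I expect to be the real obstacle is precisely (3)$\Rightarrow$(2): Theorem~\ref{main} (through Proposition~\ref{2and3}(2)) produces, from $\syz_RM\scong\syz_RN$, only \emph{some} perfect morphisms with isomorphic kernels. For $M=R/I$ the construction does give a cyclic source $R/xR$, but for $R/J$ the source need not be cyclic, whereas condition (2) demands perfect morphisms out of the cyclic modules $R/xR$ and $R/yR$; so (2) cannot be extracted directly. Upgrading the stable equivalence to a genuine isomorphism by cancellation circumvents this. The one external fact used is that $A\oplus R\cong B\oplus R$ implies $A\cong B$ over a commutative local ring, which is standard (local rings have stable range one).
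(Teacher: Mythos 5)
Your proof is correct and follows essentially the same route as the paper's: (1)$\Leftrightarrow$(2) from Proposition~\ref{ixjy}, (2)$\Rightarrow$(3)$\Leftrightarrow$(4) from Proposition~\ref{liid}(1) together with Theorem~\ref{main}, and (3)$\Rightarrow$(2) by first upgrading $I\scong J$ to a genuine isomorphism $\theta\colon I\to J$ and then transporting a nonzerodivisor $x\in I$ to $y=\theta(x)\in J$, exactly as in the paper; your closing remark about why Proposition~\ref{2and3}(2) alone does not yield (2) matches the paper's reason for giving an explicit argument. The only deviation is in the upgrading step: you cancel the free summand via the rank count and stable-range-one (Evans) cancellation over a local ring, whereas the paper treats the free case separately and otherwise passes to the $\m$-adic completion, applies Krull--Schmidt there, and descends the isomorphism via \cite[Exercise 7.5]{E}; both arguments are valid.
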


\begin{proof}
(1) $\Leftrightarrow$ (2):
The equivalence follows from Proposition \ref{ixjy}.

(2) $\Rightarrow$ (3) $\Leftrightarrow$ (4):
Let $x\in I$ and $y\in J$ be nonzerodivisors of $R$, and let $f:R/xR\to R/I$ and $g:R/yR\to R/J$ be the natural surjections.
Then Proposition \ref{liid}(1) implies $\Ker f=I/xR$, $\Ker g=J/yR$, $\li_f(R/I)\cong R/(xR:I)$ and $\li_g(R/J)\cong R/(xR:J)$.
Theorem \ref{main} deduces the implications
$$
I/xR\cong J/yR\quad\Rightarrow\quad I\scong J\quad\Leftrightarrow\quad(xR:I)\scong(yR:J).
$$

(3) $\Rightarrow$ (2):
The implication is essentially shown in the proof of the corresponding implication in Theorem \ref{main}, or more precisely, in the proof of Proposition \ref{2and3}(2).
We can give a more explicit proof in our current settings, so let us do it.
Suppose that $I$ is stably equivalent to $J$.

We claim that $I$ is isomorphic to $J$ as an $R$-module.
In fact, one has $I\oplus R^{\oplus a}\cong J\oplus R^{\oplus b}$ for some integers $a,b\ge0$.
As $I,J$ have rank one, we have $a=b$.
If $I$ is a free $R$-module, then so is $J$, and $I\cong J\cong R$.
So let us assume that $I$ and $J$ are nonfree $R$-modules.
Denoting by $\widehat{(-)}$ the $\m$-adic completion, we have an isomorphism $\widehat{I}\oplus\widehat{R}^{\oplus a}\cong\widehat{J}\oplus\widehat{R}^{\oplus a}$.
Since $\widehat I$ and $\widehat J$ are nonfree $\widehat R$-modules, the Krull--Schmidt theorem implies $\widehat I\cong\widehat J$, and hence $I\cong J$ by \cite[Exercise 7.5]{E}.
Now the claim follows.

Let $\phi:I\to J$ be an isomorphism of $R$-modules.
As $I$ has positive grade, there exists an $R$-regular element $x\in I$.
Putting $y=\phi(x)$, we see that $y$ is an $R$-regular element in $J$.
The isomorphism $\phi$ induces an isomorphism $I/xR\to J/yR$.
Thus the statement (2) holds.
\end{proof}

In what follows, we concentrate on investigate perfect linkage in codimension one by taking advantage of Theorem \ref{main} together with the above Proposition \ref{ic1}.
First of all, we construct an example of CI-linkage of ideals of a singular local ring.
See also Example \ref{xy} stated later.

\begin{ex}
Let $R=k[[x,y]]/(xy)$, where $k$ is a field.
Let $I$ be a nonfree $\m$-primary ideal, where $\m$ is the maximal ideal of $R$.
Then $I$ is directly CI-linked to $\m$.
In particular, one has $R/I\sim k$.
\end{ex}

\begin{proof}
The second assertion follows from Proposition \ref{liid}(2).
We show the first assertion.
It follows from \cite[Theorem 1.2]{S} that $\nu_R(I)\le2$.
As $I$ is not free, we have $\nu_R(I)=2$.
We establish two claims.

\begin{claim}\label{c1}
One has $I=(x^m,y^n)$ for some $m,n\in\Z_{>0}$.
\end{claim}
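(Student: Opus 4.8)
The plan is to exploit the concrete structure of $R=k[[x,y]]/(xy)$ together with the fact, already established, that $\nu_R(I)=2$. Write $I=(f,g)$ with $f,g\in\m$. Since $xy=0$ in $R$, every element of $R$ can be written as $\lambda+a(x)+b(y)$ with $\lambda\in k$, $a(x)\in xk[[x]]$ and $b(y)\in yk[[y]]$; as $I$ is $\m$-primary it contains a power of $x$ and a power of $y$, so we may first arrange that $f$ and $g$ each lie in $xk[[x]]$ and $yk[[y]]$ respectively after modifying the generating set. Concretely, I would argue that the images of $I$ in the two branches $R/(y)\cong k[[x]]$ and $R/(x)\cong k[[y]]$ are nonzero ideals, hence of the form $(x^m)$ and $(y^n)$ for some $m,n\in\Z_{>0}$; pulling back generators gives elements $f\in I$ with $f\equiv x^m\bmod (y)$ and $g\in I$ with $g\equiv y^n\bmod (x)$.

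The key step is then to upgrade these congruences to the honest equalities $x^m\in I$ and $y^n\in I$, and to show that these two elements already generate $I$. For the first point, note $f=x^m u+ y^n v$ for some $u\in k[[x]]$ with $u$ a unit (since $f\equiv x^m\bmod(y)$ and $m$ is the order) and $v\in k[[y]]$; but $x\cdot f = x^{m+1}u$ because $xy=0$, which shows $x^{m+1}u\in I$, and since $u$ is a unit this only gives $x^{m+1}\in I$. To get $x^m$ itself I would instead observe that $I$ is generated by two elements, so $I/\m I$ is two-dimensional, and the two classes $[f],[g]$ form a basis; a minimal generating set can therefore be chosen inside $xk[[x]]\cup yk[[y]]$ by Nakayama after checking that $f$ and a suitable combination project onto this basis. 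Cleanly: since $R$ is a fiber product $k[[x]]\times_k k[[y]]$, an ideal $I$ with $\nu_R(I)=2$ that is $\m$-primary must be of the form $I_1\times_k I_2$ with $I_1=(x^m)\subseteq k[[x]]$, $I_2=(y^n)\subseteq k[[y]]$ — here one uses that $I$ maps onto $I_1$ and $I_2$ and that the kernel of $I\to I_1\oplus I_2$ is zero because $xy=0$ forces any element of $I$ killed in both branches to be killed by $\m$ and hence, being in an $\m$-primary ideal missing the socle, to be $0$. Then $I=(x^m,y^n)$ follows directly.

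The main obstacle I anticipate is the bookkeeping needed to pass from "$I$ has a two-element minimal generating set" to "one may take the generators to be pure powers $x^m$ and $y^n$" — i.e. ruling out mixed generators such as $x^m+x^{m+1}$ or $x^m+y^n$ that a priori could occur. The clean way around this is the fiber-product description of $R$: identifying $R=k[[x]]\times_k k[[y]]$ reduces the statement to the elementary fact that a nonzero ideal of $k[[x]]$ is a power of its maximal ideal, and that an $\m$-primary ideal of the fiber product that is not contained in the "diagonal-deficient" locus decomposes as a product of such ideals. Once that structural statement is in place, the identification $I=(x^m,y^n)$ with $m,n\in\Z_{>0}$ (positivity because $I\subseteq\m$) is immediate.
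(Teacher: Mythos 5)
Your branch-projection setup is correct as far as it goes: since $I\subseteq\m$ is $\m$-primary, its images in $R/(y)\cong k[[x]]$ and $R/(x)\cong k[[y]]$ are $(x^m)$ and $(y^n)$ with $m,n\in\Z_{>0}$, and writing any element of $I$ as $a(x)+b(y)$ shows $I\subseteq x^mk[[x]]+y^nk[[y]]=(x^m,y^n)$; your multiplication trick moreover gives $x^{m+1},y^{n+1}\in I$. The genuine gap is the reverse inclusion $(x^m,y^n)\subseteq I$, i.e.\ $x^m,y^n\in I$, which is the whole content of the claim, and the justification you offer does not work. That the kernel of $I\to I_1\oplus I_2$ vanishes is true for every ideal, since the kernel of $R\to R/(x)\oplus R/(y)$ is $(x)\cap(y)=(xy)=0$, but it is irrelevant: a submodule of $I_1\oplus I_2$ can inject into the sum and surject onto each factor without being all of it (the graph phenomenon). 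Concretely, $I=(x+y)$ is $\m$-primary (it contains $x^2=x(x+y)$ and $y^2=y(x+y)$), its branch images are $(x)$ and $(y)$, and $I\to(x)\oplus(y)$ is injective, yet $x\notin I$; such ``graph-type'' ideals are exactly what lives between $(x^{m+1},y^{n+1})$ and $(x^m,y^n)$. What excludes them in the claim is the standing hypothesis $\nu_R(I)=2$ (equivalently, $I$ nonfree), and your argument never genuinely uses it: the Nakayama sentence about choosing a minimal generating set inside $xk[[x]]\cup yk[[y]]$ ``after checking that $f$ and a suitable combination project onto this basis'' is a restatement of what must be proved, and the asserted structural statement that an $\m$-primary ideal of the fiber product decomposes as $I_1\times_kI_2$ is false in general (same example), so whatever you intend by the undefined ``diagonal-deficient locus'' has to encode precisely the two-generator hypothesis.

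This missing step is exactly where the paper's proof does its work: it writes the two generators as $f=px^a+qy^b$ and $g=rx^c+sy^d$ and eliminates the mixed terms by a case analysis, multiplying by $x$ or $y$ (which kills the opposite branch because $xy=0$) and reducing one generator modulo the other; each case ends either with two pure-power generators or with one generator becoming a multiple of the other, contradicting $\nu_R(I)=2$. If you wish to keep the fiber-product language, you still need an argument of this type, for instance: if some minimal generator has nonzero components on both branches, reduce the other generator modulo it and show $I$ becomes principal, contradicting $\nu_R(I)=2$. As written, the proposal assumes the decisive structural statement rather than proving it.
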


\begin{proof}[Proof of Claim \ref{c1}]
Write $I=(f,g)$ for some $f,g\in R$.
There exist $p,q,r,s\in k$ and $a,b,c,d\in\Z_{>0}$ such that $f=px^a+qy^b$ and $g=rx^c+sy^d$.

Let us first consider the case $p=0$.
In this case we may assume $q=r=1$, so $f=y^b$ and $g=x^c+sy^d$.
If $b\le d$, then we may assume $s=0$, and we are done.
Let $b>d$, and assume $s\ne0$.
Then $f=y^b\equiv(-s^{-1}x^c)y^{b-d}=0$ mod $g$, which is a contradiction.
Hence $s=0$, and we are done.

The case $r=0$ is similarly handled, so now let us consider the case $p,r\ne0$.
Then we may assume $p=r=1$ and $a\le c$.
When $a<c$, we have $g=x^c+sy^d\equiv(-qy^b)x^{c-a}+sy^d=sy^d$ mod $f$, and we can reduce to the case $r=0$.
When $a=c$, we have $g\equiv-qy^b+sy^d=ty^e$ mod $f$ for some $t\in k$ and $e\in\Z_{>0}$, and reduce to the case $r=0$.
\renewcommand{\qedsymbol}{$\square$}
\end{proof}

\begin{claim}\label{c2}
One has $I=((x^m-y^n):\m)$ for some $m,n\in\Z_{>0}$.
\end{claim}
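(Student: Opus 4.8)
Claim \ref{c1} reduces us to the case $I = (x^m, y^n)$ with $m, n \in \Z_{>0}$, and the plan is to prove Claim \ref{c2} for these $m,n$ by establishing directly that $(zR : \m) = I$ for $z := x^m - y^n$. This also yields the first assertion of the example at once: $z$ is a nonzerodivisor of $R$ lying in $I \cap \m$, so the equality $(zR:\m) = I$, together with the fact that the hypersurface $R$ is Gorenstein, forces $(zR:I) = \m$ as well (see \cite[Exercise 3.2.15]{BH}), i.e.\ $I$ is directly CI-linked to $\m$ via $z$.

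First I would note that $z = x^m - y^n$ is a nonzerodivisor: the ring $R = k[[x,y]]/(xy)$ is reduced with $\mathrm{Ass}(R) = \{(x),(y)\}$, so its set of zerodivisors is $(x) \cup (y)$; the residue of $z$ is $-y^n \neq 0$ modulo $(x)$ and $x^m \neq 0$ modulo $(y)$, hence $z$ lies in neither prime. Clearly $z \in I \cap \m$.

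For the equality $(zR:\m) = I$ I would pass to $\bar{R} := R/zR$. Since $z$ is a nonzerodivisor on the one-dimensional Gorenstein ring $R$, the ring $\bar{R}$ is Artinian Gorenstein, so $\soc(\bar{R}) = (0 :_{\bar{R}} \m)$ is one-dimensional over $k$. The element $\overline{x^m} = \overline{y^n}$ of $\bar{R}$ lies in $\soc(\bar{R})$: as $xy = 0$ one computes $zx = x^{m+1}$ and $zy = -y^{n+1}$, so $\overline{x^m}\,\overline{x} = \overline{x^{m+1}} = 0$ and $\overline{x^m}\,\overline{y} = \overline{x^m y} = 0$. It is also nonzero: writing a general element of $zR$ as $\alpha(x^m - y^n) + \sum_{i\ge1}\beta_i x^{m+i} + \sum_{j\ge1}\gamma_j y^{n+j}$ with $\alpha, \beta_i, \gamma_j \in k$ and comparing the coefficients of $y^n$ and of $x^m$ shows it cannot equal $x^m$. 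Hence $\soc(\bar{R}) = k\,\overline{x^m}$, and therefore $(zR:\m)$ — the preimage of $\soc(\bar{R})$ under $R \twoheadrightarrow \bar{R}$ — equals $zR + x^m R = (x^m - y^n,\, x^m) = (x^m, y^n) = I$.

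I do not anticipate a genuine difficulty; the only point needing care is the bookkeeping in $R = k[[x,y]]/(xy)$, in particular the explicit description of $zR$ as a $k$-subspace (every product collapses to a pure power of $x$ or of $y$ because $xy = 0$), which is used both for the nonvanishing of $\overline{x^m}$ and for the identification $zR + x^m R = I$. If one prefers to avoid the socle, the same conclusion follows by computing $(zR:\m)$ directly: $I\m \subseteq zR$ because its generators $x^{m+1} = zx$, $y^{n+1} = -zy$, $x^m y = 0$, $xy^n = 0$ all lie in $zR$; and conversely, if $r = c + \sum_{i\ge1} a_i x^i + \sum_{j\ge1} b_j y^j$ satisfies $rx, ry \in zR$, then matching coefficients forces $c = 0$, $a_i = 0$ for $i < m$ and $b_j = 0$ for $j < n$, so $r \in (x^m, y^n) = I$.
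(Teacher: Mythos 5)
Your argument is correct: every element of $R=k[[x,y]]/(xy)$ is $c+\sum_{i\ge1}a_ix^i+\sum_{j\ge1}b_jy^j$, so $zR$ (with $z=x^m-y^n$) consists exactly of the elements $\alpha(x^m-y^n)+\sum_{i\ge1}\beta_ix^{m+i}+\sum_{j\ge1}\gamma_jy^{n+j}$, and from this your verification that $\overline{x^m}$ is a nonzero socle element of $R/zR$, as well as the coefficient-matching in your alternative computation of $(zR:\m)$, go through without difficulty; the identification $(zR:\m)=zR+x^mR=(x^m,y^n)=I$ is also fine since $x^m\m\subseteq zR$. The route differs from the paper's in the mechanics: the paper proves only the containment $I\subseteq(zR:\m)$ (via $zx=x^{m+1}$, $zy=-y^{n+1}$) and then forces equality by a length count, computing $\ell_R(R/I)=m+n-1$ and $\ell_R(R/(zR:\m))=\ell_R(R/zR)-1=m+n-1$ from explicit monomial $k$-bases (with a harmless reduction to $m\le n$), using Gorensteinness of the Artinian ring $R/zR$ only through the fact that its socle is one-dimensional. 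You instead pin down the socle generator explicitly, $\soc(R/zR)=k\,\overline{x^m}$, and read off $(zR:\m)$ as the preimage, which avoids any length computation and the $m\le n$ case split; your second, purely coefficient-matching argument even dispenses with Gorensteinness altogether. Both approaches hinge on the same structural fact about $R/zR$, but yours trades the paper's numerical comparison for an explicit description of $zR$, and as a bonus records the nonzerodivisor property of $z$ (via $\mathrm{Ass}(R)=\{(x),(y)\}$), which the paper uses but does not spell out.
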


\begin{proof}[Proof of Claim \ref{c2}]
By Claim \ref{c1} we have $xI=(x^{m+1})=(x(x^m-y^n))\subseteq(x^m-y^n)$, and similarly $yI\subseteq(x^m-y^n)$.
Hence $I$ is contained in $((x^m-y^n):\m)$.
The ring $R/I$ has the $k$-basis $1,x,\dots,x^{m-1},y,\dots,y^{n-1}$ and so $\ell_R(R/I)=m+n-1$.
The ring $R/((x^m-y^n):\m)$ is the quotient of the artinian Gorenstein local ring $R/(x^m-y^n)$ by its socle.
The ring $R/(x^m-y^n)$ has the $k$-basis $1,x,\dots,x^{m-1},y,\dots,y^n$ if $m\le n$.
Hence $\ell_R(R/((x^m-y^n):\m))=\ell_R(R/(x^m-y^n))-1=m+n-1$.
Consequently, we have an equality $I=((x^m-y^n):\m)$.
\renewcommand{\qedsymbol}{$\square$}
\end{proof}
Claim \ref{c2} shows that $I$ is directly CI-linked to $\m$ by the nonzerodivisor $x^m-y^n$ of $R$.
\end{proof}

Next we construct an example of perfect linkage of CM modules, and compare several numerical invariants for modules.
It turns out that taking a perfect link do not preserve any of the minimal number of generators, type and length.

\begin{ex}
Let $R=k[[x,y]]/(x^2-y^3)$, where $k$ is a field.
Denote by $\m$ the maximal ideal of $R$.
\begin{enumerate}[(1)]
\item
One has that $\m$ is directly CI-linked to $(x,y^2)$, and $(x,y^2)$ is directly CI-linked to $(x^2,xy)$.
\item
One has $k\sim R/(x,y^2)\sim R/(x^2,xy)$ and $\m/(x^2)\sim k\sim\m/(y^2)$.
In particular, the CM $R$-modules $k,R/(x,y^2),R/(x^2,xy),\m/(x^2),\m/(y^2)$ of codimension $1$ are perfectly linked with one another.
\item
The following is a list of minimal numbers of generators, types and lengths.
\begin{center}
{\tabcolsep=5mm
\begin{tabular}{|c|c|c|c|c|c|}
\hline
& $k$ & $R/(x,y^2)$ & $R/(x^2,xy)$ & $\m/(x^2)$ & $\m/(y^2)$\\
\hline
$\nu_R$ & $1$ & $1$ & $1$ & $2$ & $2$\\
\hline
$r_R$ & $1$ & $1$ & $2$ & $1$ & $1$\\
\hline
$\ell_R$ & $1$ & $2$ & $4$ & $5$ & $3$\\
\hline
\end{tabular}
}
\end{center}
\end{enumerate}
In particular, having type $1$ is not preserved by double CI-linkage of ideals.
Compare this with the fact that having type $1$ is preserved by double CI-linkage of ideals in codimension at most $3$ for a regular ring; see \cite{PS,W}.
\end{ex}

\begin{proof}
(1) It is easy to see that $((x):\m)=(x,y^2)=((x^2):(x^2,xy))$.
Since $x$ and $x^2$ are nonzerodivisors of $R$, the ideal $\m$ is directly CI-linked to $(x,y^2)$, which is directly CI-linked to $(x^2,xy)$.

(2) It follows from (1) and Proposition \ref{liid}(2) that $k\sim R/(x,y^2)\sim R/(x^2,xy)$.
We have an exact sequence $0\to\syz^2k\xrightarrow{\theta}R^{\oplus2}\xrightarrow{\left(\begin{smallmatrix}x&y\end{smallmatrix}\right)}R\to k\to0$.
Since $\det\left(\begin{smallmatrix}
x&y\\
0&x
\end{smallmatrix}\right)=x^2$ is a nonzerodivisor of $R$, there is an exact sequence $0 \to R^{\oplus2} \xrightarrow{\left(\begin{smallmatrix}
x&y\\
0&x
\end{smallmatrix}\right)} R^{\oplus2} \to P \to 0$.
The $R$-module $P$ has rank $0$, so it belongs to $\perf^1(R)$.
There is a commutative diagram
$$
\begin{CD}
0 @>>> 0 @>>> R^{\oplus2} @= R^{\oplus2} @>>> 0 \\
@. @VVV @V{\left(\begin{smallmatrix}
x&y\\
0&x
\end{smallmatrix}\right)}VV @V{\left(\begin{smallmatrix}
x&y
\end{smallmatrix}\right)}VV \\
0 @>>> R @>{\left(\begin{smallmatrix}
0\\
1
\end{smallmatrix}\right)}>> R^{\oplus2} @>{\left(\begin{smallmatrix}
1&0
\end{smallmatrix}\right)}>> R @>>> 0
\end{CD}
$$
with exact rows, and applying the snake lemma to this gives rise to an exact sequence
\begin{equation}\label{4t}
0 \to \syz^2k \xrightarrow{\delta} R \to P \xrightarrow{f} k \to 0,
\end{equation}
where $f$ is a perfect morphism of $k\in\cm^1(R)$.
Note that when viewing $\syz^2k$ as a submodule of $R^{\oplus2}$ via the injection $\theta$, the map $\delta$ sends $\binom{a}{b}\in\syz^2k$ to $bx\in R$.
The minimal free resolution of $k$ is
$$
\cdots\xrightarrow{\left(\begin{smallmatrix}
-y^2&-x\\
x&y
\end{smallmatrix}\right)} R^{\oplus2} \xrightarrow{\left(\begin{smallmatrix}
y&x\\
-x&-y^2
\end{smallmatrix}\right)} R^{\oplus2} \xrightarrow{\left(\begin{smallmatrix}
-y^2&-x\\
x&y
\end{smallmatrix}\right)} R^{\oplus2} \xrightarrow{\left(\begin{smallmatrix}
y&x\\
-x&-y^2
\end{smallmatrix}\right)} R^{\oplus 2} \xrightarrow{\left(\begin{smallmatrix}
x&y
\end{smallmatrix}\right)}R\to k\to0,
$$
and there is a commutative diagram
$$
\begin{CD}
R^{\oplus2} @>{\left(\begin{smallmatrix}
y&x\\
-x&-y^2
\end{smallmatrix}\right)}>> R^{\oplus2} @>>> \m @>>> 0 \\
@V{\left(\begin{smallmatrix}
0&1\\
1&0
\end{smallmatrix}\right)}V{\cong}V @V{\left(\begin{smallmatrix}
0&1\\
1&0
\end{smallmatrix}\right)}V{\cong}V @V{\phi}V{\cong}V \\
R^{\oplus2} @>{\left(\begin{smallmatrix}
-y^2&-x\\
x&y
\end{smallmatrix}\right)}>> R^{\oplus2} @>>> \syz^2k @>>> 0
\end{CD}
$$
with exact rows and isomorphic vertical maps.
It is seen that the composite map $\delta\phi:\m\to R$ is a multiplication map by $-y^2$.
From \eqref{4t} we obtain a short exact sequence $0 \to R/(xy^2,x^2) \to P \xrightarrow{f} k \to 0$.
Thus we have
$$
\li_fk=\Ext_R^1(R/(xy^2,x^2),R)\cong\Hom_{R/(x^2)}(R/(xy^2,x^2),R/(x^2))\cong(0:_{R/(x^2)}xy^2)=\m/(x^2),
$$
which shows $\m/(x^2)\sim k$.
Similarly, there are an exact sequence $0 \to R^{\oplus2} \xrightarrow{\left(\begin{smallmatrix}
y&x\\
0&y
\end{smallmatrix}\right)} R^{\oplus2} \to Q \to 0$ and a commutative diagram
$$
\begin{CD}
0 @>>> 0 @>>> R^{\oplus2} @= R^{\oplus2} @>>> 0\phantom{.} \\
@. @VVV @V{\left(\begin{smallmatrix}
y&x\\
0&y
\end{smallmatrix}\right)}VV @V{\left(\begin{smallmatrix}
y&x
\end{smallmatrix}\right)}VV \\
0 @>>> R @>{\left(\begin{smallmatrix}
0\\
1
\end{smallmatrix}\right)}>> R^{\oplus2} @>{\left(\begin{smallmatrix}
1&0
\end{smallmatrix}\right)}>> R @>>> 0.
\end{CD}
$$
From these we get an exact sequence $0 \to R/(xy,y^2) \to Q \xrightarrow{g} k \to 0$, and an isomorphism $\li_gk\cong\m/(y^2)$.
Thus $\m/(y^2)\sim k$.

(3) The list is obtained by direct calculations.
For instance, the fact that $\m/(x^2)$ has type $1$ follows from the isomorphisms $\Hom_R(k,\m/(x^2))\cong(0:_{\m/(x^2)}\m)=(0:_{R/(x^2)}\m)=\soc R/(x^2)\cong k$.
\end{proof}

In the case where there is only one nonfree indecomposable MCM module, perfect linkage classes are rather restrictive as follows.

\begin{prop}\label{wa}
Let $R$ be a Gorenstein local ring possessing exactly one isomorphism class of a nonfree indecomposable MCM $R$-module.
(For example, the rings $\C[[x,y]]/(x^2-y^3)$ and $\C[[x,y,z]]/(x^2-yz)$ satisfy this condition.)
\begin{enumerate}[\rm(1)]
\item
Let $M,N$ be CM $R$-modules of codimension $1$ and infinite projective dimension.
Then there exist integers $m,n\ge1$ such that $M^{\oplus m}\sim N^{\oplus n}$.
\item
Let $I,J$ be CM ideals $I,J$ of $R$ with codimension $1$ and infinite projective dimension.
Then $I$ is isomorphic to $J$ as an $R$-module, and hence $I$ is doubly CI-linked to $J$.
\end{enumerate}
\end{prop}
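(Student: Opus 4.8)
The plan is to exploit the extreme simplicity of $\mcm(R)$. By the Krull--Schmidt theorem (passing to the completion if necessary --- both sample rings are already complete) every MCM $R$-module is isomorphic to $X^{\oplus j}\oplus R^{\oplus k}$, where $X$ denotes the unique nonfree indecomposable MCM module. Two facts will do the heavy lifting. First, if $L$ is a CM $R$-module of codimension one and infinite projective dimension, then $\syz L$ is a nonfree MCM module, hence $\syz L\scong X^{\oplus a}$ for some $a\ge 1$. Second, $X^{*}$ is again a nonfree indecomposable MCM module --- it is nonfree because $X^{**}\cong X$, and indecomposable because $\Hom_R(-,R)$ is a duality on $\mcm(R)$ by Proposition \ref{qres}(2) with $\omega=R$ --- so $X^{*}\cong X$, and therefore $(\syz L)^{*}\scong X^{\oplus a}$ as well. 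This self-duality is exactly what will let us \emph{prescribe} the kernel of a perfect morphism, turning a mere ``same perfect linkage class'' statement into a single direct perfect link.

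For (1), write $\syz M\scong X^{\oplus a}$ and $\syz N\scong X^{\oplus c}$ with $a,c\ge 1$; I claim $M^{\oplus c}\sim N^{\oplus a}$. Using the two facts, $((\syz M)^{*})^{\oplus c}\scong X^{\oplus ac}\scong(\syz N)^{\oplus a}=\syz(N^{\oplus a})$, so $((\syz M)^{*})^{\oplus c}$ is stably equivalent to a first syzygy of $N^{\oplus a}$; after adding a suitable free module it becomes an honest first syzygy, i.e. there is an exact sequence
$$
0\to((\syz M)^{*})^{\oplus c}\oplus R^{\oplus s}\to R^{\oplus t}\to N^{\oplus a}\to 0 .
$$
Dualizing by $\Hom_R(-,R)$ --- using that $N^{\oplus a}$ has grade one, that the left-hand term is MCM (hence reflexive and with vanishing higher $\Ext$), and that $\Ext_R^1(N^{\oplus a},R)\cong(\Ext_R^1(N,R))^{\oplus a}$ --- produces
$$
0\to R^{\oplus t}\to(\syz M)^{\oplus c}\oplus R^{\oplus s}\to(\Ext_R^1(N,R))^{\oplus a}\to 0 .
$$
Since $(\syz M)^{\oplus c}\oplus R^{\oplus s}$ is a (nonminimal) first syzygy of $M^{\oplus c}$, forming the pushout of this sequence with $0\to(\syz M)^{\oplus c}\oplus R^{\oplus s}\to R^{\oplus u}\to M^{\oplus c}\to 0$ along their common term --- exactly as in the proof of Proposition \ref{2and3}(2) --- yields exact sequences $0\to R^{\oplus t}\to R^{\oplus u}\to P\to 0$ and $0\to(\Ext_R^1(N,R))^{\oplus a}\to P\xrightarrow{f}M^{\oplus c}\to 0$. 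The first shows $\pd_R P\le 1$, and the second together with Proposition \ref{qres}(1) shows $P\in\cm^1(R)$; hence $P\in\perf^1(R)$ and $f$ is a perfect morphism of $M^{\oplus c}$ with $\Ker f\cong(\Ext_R^1(N,R))^{\oplus a}$. By Proposition \ref{qres}(2), $\li_f(M^{\oplus c})=\Ext_R^1(\Ker f,R)\cong(\Ext_R^1(\Ext_R^1(N,R),R))^{\oplus a}\cong N^{\oplus a}$, i.e. $M^{\oplus c}\sim N^{\oplus a}$, so $m=c$ and $n=a$ work.

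For (2), take $M=R/I$ and $N=R/J$. Since $R/I$ is a cyclic CM module of codimension one with infinite projective dimension, $\syz(R/I)=I$ is a nonfree MCM module, so $I\cong X^{\oplus a}\oplus R^{\oplus b}$ with $a\ge 1$. But $I$ contains a nonzerodivisor, so it has rank one, while $X$ is torsion-free (being MCM over a CM ring) and hence of positive rank; comparing ranks forces $a=1$, $\rank X=1$ and $b=0$, so $I\cong X$, and likewise $J\cong X$. Thus $I\cong J$, in particular $I\scong J$, and Proposition \ref{ic1} gives that $I$ is doubly CI-linked to $J$. (When $R$ is not a domain the same computation is carried out over the total quotient ring of $R$, a finite product of Artinian Gorenstein local rings, using that the torsion-free module $X$ has nonzero image there.)

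The main obstacle, and the place where the hypothesis is genuinely used, is the passage in (1) from stable equivalence of syzygies to a \emph{direct} perfect link: Proposition \ref{2and3}(2) by itself only produces perfect morphisms of $M^{\oplus c}$ and $N^{\oplus a}$ with a common kernel, hence merely a double perfect linkage. The self-duality $X^{*}\cong X$ is what allows one to arrange that $(\Ext_R^1(N,R))^{\oplus a}$ itself --- not some intermediate module --- occurs as $\Ker f$. The remaining points are routine bookkeeping: checking that the pushout $P$ is perfect of codimension one (an injective free presentation of the right grade, nonzero), and, in (2), the rank comparison over the (total quotient ring of the) base.
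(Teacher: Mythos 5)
Your proposal is correct and takes essentially the same route as the paper's proof: in (1) both arguments stably identify the relevant MCM modules with powers of the unique nonfree indecomposable, realize a power of $\Ext_R^1(N,R)$ as the kernel of a perfect morphism onto a power of $M$ via the pushout construction of Proposition \ref{2and3}(2), and recover $N^{\oplus n}$ from the duality of Proposition \ref{qres}(2), while in (2) both reduce to $I\cong J$ (the ideal being forced to be the unique nonfree indecomposable) and invoke Proposition \ref{ic1}. The only cosmetic difference is that you use the self-duality $X^{*}\cong X$ and dualize a free presentation of $N^{\oplus a}$, whereas the paper takes a MCM approximation of $\Ext_R^1(N,R)$ (which, up to free summands, is exactly $(\syz N)^{*}\to\Ext_R^1(N,R)$) and matches exponents by multiplying them, thereby avoiding the self-duality step.
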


\begin{proof}
Let $C$ be a nonfree indecomposable MCM $R$-module.
By assumption every nonfree indecomposable MCM $R$-module is isomorphic to $C$.

(1) Since $\syz M$ is a nonfree MCM $R$-module, it is stably equivalent to $C^{\oplus n}$ for some $n\ge1$.
Set $E:=\Ext_R^1(N,R)$, and let $0 \to Y \to X \xrightarrow{f} E \to 0$ be an exact sequence such that $f$ is a MCM approximation (hence $Y$ has finite projective dimension).
Then $X$ is stably equivalent to $C^{\oplus m}$ for some $m\ge0$.
If $m=0$, then $E$ has finite projective dimension, and so does $N$ by Proposision \ref{qres}, which is a contradiction.
Hence $m\ge1$.
There are stable equivalences $X^{\oplus n}\scong C^{\oplus mn}\scong\syz M^{\oplus m}$, so
$$
X^{\oplus n}\oplus R^{\oplus a}\cong\syz M^{\oplus m}\oplus R^{\oplus b}=:T
$$
for some $a,b\ge0$.
Since $E$ is CM of codimension $1$ and $X$ is MCM, the depth lemma implies that $Y$ is a free $R$-module.
We have two exact sequences
$$
0 \to Y^{\oplus n}\oplus R^{\oplus a} \xrightarrow{g} T \to E^{\oplus n} \to 0,\qquad
0 \to T \xrightarrow{h} R^{\oplus c} \to M^{\oplus m} \to 0
$$
for some $c$.
A pushout diagram gives rise to exact sequences
$$
0 \to Y^{\oplus n}\oplus R^{\oplus a} \xrightarrow{hg} R^{\oplus c} \to P \to 0,\qquad
0 \to E^{\oplus n}\to P\xrightarrow{\phi} M^{\oplus m} \to0.
$$
Note that $m,n$ are positive.
It is observed from these exact sequences that $P$ is a perfect $R$-module of codimension $1$, whence $\phi$ is a perfect morphism.
Proposition \ref{qres} yields $\li_\phi(M^{\oplus m})=\Ext_R^1(E^{\oplus n},R)\cong N^{\oplus n}$.

(2) The ideals $I$ and $J$ are nonfree MCM $R$-modules, and indecomposable as $R$ is a domain.
Hence $I\cong C\cong J$.
Proposition \ref{ic1} shows that $I$ and $J$ are doubly CI-linked.
\end{proof}

In the following example, there exist more than one isomorphism class of a nonfree indecomposable MCM module, so the above Proposition \ref{wa} cannot be applied.
However, one can obtain the same conclusion even in this case.

\begin{ex}\label{xy}
Let $k$ be an algebraically closed field of characteristic $0$.
Let $R=k[[x,y]]/(xy)$.
Denote by $\m$ the maximal ideal of $R$.
\begin{enumerate}[(1)]
\item
The first syzygy of an $R$-module of finite length is isomorphic to a finite direct sum of copies of $R$ and $\m$.
\item
Let $M$ and $N$ be CM $R$-modules of codimension one and with infinite projective dimension.
Then $M^{\oplus m}\sim N^{\oplus n}$ for some $m,n\ge1$.
\end{enumerate}
\end{ex}

\begin{proof}
(1) The assertion follows from \cite[Example 1.3]{K}, which is stated without a proof.
We show the assertion for the convenience of the reader.
By \cite[(9.9)]{Y}, the nonisomorphic indecomposable MCM $R$-modules are $R$, $R/xR$ and $R/yR$.
Let $M$ be an $R$-module of finite length.
Then $\syz M$ is MCM, so one can write $\syz M\cong R^{\oplus a}\oplus(R/xR)^{\oplus b}\oplus(R/yR)^{\oplus c}$ for some $a,b,c\ge0$.
There is an exact sequence $0\to\syz M\to R^{\oplus d}\to M\to0$, and localizing this at the prime ideal $\p=xR$ yields $R_\p^{\oplus d}\cong(\syz M)_\p\cong R_\p^{\oplus a}\oplus R_\p^{\oplus b}$ as $M_\p=0=(R/yR)_\p$ and $(R/xR)_\p=R_\p$.
This implies $d=a+b$.
Similarly, localization at the prime ideal $\q=yR$ shows $d=a+c$.
Therefore we have $b=c$, and $\syz M\cong R^{\oplus a}\oplus(R/xR\oplus R/yR)^{\oplus b}\cong R^{\oplus a}\oplus\m^{\oplus b}$.

(2) Let $E$ be a CM $R$-module of codimension $1$ and $f:X\to E$ a MCM approximation.
Then $E$ is of finite length, and stably equivalent to $\syz^{-1}\syz E$ by Remark \ref{mcmappr}.
It follows from (1) that $\syz E$ is stably equivalent to $\m^{\oplus m}$ for some $m\ge0$.
We easily see $\syz\m\cong\m$, whence $\syz^{-1}\m\cong\m$.
Therefore $X$ is stably equivalent to $\m^{\oplus m}$.
Thus one can show the assertion similarly to the proof of Proposition \ref{wa}(1).
\end{proof}

\begin{rem}
In view of (the proof of) Example \ref{xy}(1), one might wonder if every Gorenstein local ring having exactly two indecomposable nonfree MCM modules $M,N$ is such that if the $r$th syzygy of a CM module of codimention $r$ contains $M$ as a direct summand, then it also contains $N$ as a direct summand.
This is not true in general.
Indeed, consider the Gorenstein local ring $R=k[[x,y]]/(x^2-y^5)$ with $k$ an algebraically closed field.
Then by \cite[Proposition (5.11)]{Y} the ring $R$ has only three nonisomorphic indecomposable MCM modules: $R$, $\m=(x,y)$ and $\n=(x,y^2)$.
For a MCM module $M$ and a CM module $C$ of codimension $r$, let $\chi_M(C)$ denote the number of copies of $M$ appearing as a direct summand in the MCM module $\syz^rC$.
Then $X:=R/\m$ and $Y:=R/\n$ are CM $R$-modules of codimension $1$.
One has $\chi_\m(X)=1\ne0=\chi_\n(X)$ and $\chi_\m(Y)=0\ne1=\chi_\n(Y)$.
\end{rem}

Now we prove the main result of this section, which gives a characterization of the perfect linkage in codimension one.

\begin{thm}\label{main2}
Let $R$ be a Gorenstein local ring of positive dimension.
Let $M,N$ be CM $R$-modules of codimension one.
\begin{enumerate}[\rm(1)]
\item
$M$ is doubly perfectly linked to $N$ if and only if $\syz M\scong\syz N$.
\item
$M$ is triply perfectly linked to $N$ if and only if $(\syz M)^*\scong\syz N$.
\item
If $M$ and $N$ are perfectly linked, then they are either doubly or triply perfectly linked.
\end{enumerate}
\end{thm}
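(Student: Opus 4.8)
The plan is to reduce all three assertions to a single ``dualization principle'' for first syzygies under one direct perfect link, after which everything becomes formal chain manipulation.

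\emph{Key step.} I would first establish: if $A,B\in\cm^1(R)$ and $A\sim B$, then $\syz B\scong(\syz A)^{*}$. Writing $B\cong\li_fA$ for a perfect morphism $f$ of $A$ and applying Corollary \ref{genyi} with $r=1$ gives $\syz B=\syz\li_fA\scong\syz^{2}\tr\syz A$, so it remains to identify $\syz^{2}\tr X\scong X^{*}$ for the MCM module $X=\syz A$ over the Gorenstein ring $R$. This is a standard fact (see \cite{ABr}), and one checks it quickly: dualizing a minimal free resolution $\cdots\to F_1\xrightarrow{\partial_1}F_0\to X\to0$ gives a short exact sequence $0\to X^{*}\to F_0^{*}\to\syz\tr X\to0$, so $\syz\tr X\scong\syz^{-1}(X^{*})$, and applying $\syz$ (the quasi-inverse of $\syz^{-1}$ on $\lcm(R)$) yields $\syz^{2}\tr X\scong X^{*}$. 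Alternatively, one can invoke the first stable equivalence of Corollary \ref{genyi}, namely $\x_R\li_fA\scong\link_R\syz A=(\syz^{2}A)^{*}$, together with $\x_RN\scong\syz^{-1}\syz N$ for $N\in\cm^1(R)$ (Remark \ref{mcmappr}), to reach the same conclusion.

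\emph{Part (1).} If $M$ is doubly perfectly linked to $N$, say $M\sim L\sim N$, then $N\sim L$ by the symmetry of direct perfect linkage over a Gorenstein ring (Proposition \ref{basli}(3)); picking perfect morphisms $f,g$ of $M,N$ with $\li_fM\cong L\cong\li_gN$ and applying Theorem \ref{main} gives $\syz M\scong\syz N$. Conversely, if $\syz M\scong\syz N$, the last assertion of Theorem \ref{main} (available since $r=1$) produces perfect morphisms $f,g$ of $M,N$ with $\Ker f\cong\Ker g$, hence $\li_fM\cong\li_gN=:L$ by Corollary \ref{1and2}, so $M\sim L\sim N$ and $M$ is doubly perfectly linked to $N$.

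\emph{Parts (2) and (3).} For the forward direction of (2), apply the Key step along a chain $M=M_0\sim M_1\sim M_2\sim M_3=N$: one gets $\syz M_1\scong(\syz M_0)^{*}$, then $\syz M_2\scong(\syz M_1)^{*}\scong(\syz M_0)^{**}\scong\syz M_0$ (double $R$-duality being the identity on $\lcm(R)$), then $\syz M_3\scong(\syz M_2)^{*}\scong(\syz M_0)^{*}$, i.e.\ $(\syz M)^{*}\scong\syz N$. Conversely, given $(\syz M)^{*}\scong\syz N$, choose any perfect morphism $f$ of $M$ (Remark \ref{doko}) and set $L=\li_fM\in\cm^1(R)$; by the Key step $\syz L\scong(\syz M)^{*}\scong\syz N$, so $L$ is doubly perfectly linked to $N$ by (1), say $L\sim L_1\sim N$, whence $M\sim L\sim L_1\sim N$ exhibits $M$ as triply perfectly linked to $N$. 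Finally, (3) is immediate: if $M$ is perfectly linked to $N$ through a chain of length $n$, iterating the Key step gives $\syz N\scong\syz M$ when $n$ is even and $\syz N\scong(\syz M)^{*}$ when $n$ is odd, so $M$ is doubly perfectly linked to $N$ by (1) in the first case and triply perfectly linked to $N$ by (2) in the second. The only real content is thus the Key step; everything else is bookkeeping, and the main obstacle is simply pinning down precisely how $\syz$ transforms under one direct perfect link in codimension one, which is Corollary \ref{genyi} at $r=1$ combined with the classical identity $\syz^{2}\tr X\scong X^{*}$ over a Gorenstein ring.
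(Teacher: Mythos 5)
Your proof is correct and takes essentially the same route as the paper: your Key step is precisely the paper's stable equivalence $\syz\li_fM\scong\syz^2\tr\syz M\scong(\syz M)^*$ obtained from Corollary \ref{genyi} together with the classical identity $\syz^2\tr X\scong X^*$, and parts (1)--(3) then follow from Theorem \ref{main} and the same parity bookkeeping. The only cosmetic difference is that in (3) you iterate the one-step dualization directly (which treats a chain of any length, including $n=1$, uniformly), whereas the paper handles $n=1$ separately and iterates part (1) along steps of length two.
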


\begin{proof}
(1) The assertion is shown by Theorem \ref{main}.

(2) If follows from Corollary \ref{genyi} that for any perfect morphism $f$ of $M$ we have
\begin{equation}\label{sysysy}
\syz\li_fM\scong\syz^2\tr\syz M\scong(\syz M)^*.
\end{equation}
Suppose that $M$ is triply perfectly linked to $N$.
Then there exist CM modules $X$ and $Y$ such that $M\sim X\sim Y\sim N$.
Hence $X$ is doubly perfectly linked to $N$, and $\syz X$ is stably equivalent to $\syz N$ by (1).
There exists a perfect morphism $f$ of $M$ such that $\li_fM=X$, and we obtain $\syz\li_fM=\syz X\scong\syz N$.
Therefore $(\syz M)^*$ is stably equivalent to $\syz N$ by \eqref{sysysy}.

Conversely, assume $(\syz M)^*\scong\syz N$.
Then by using \eqref{sysysy} we see that $\syz\li_fM$ is stably equivalent to $\syz N$ for all perfect morphisms $f$ of $M$.
So now take a perfect morphism $f:P\to M$ (see Remark \ref{doko}).
Then we have $\syz\li_fM\scong\syz N$, and Theorem \ref{main} implies that $\li_g\li_fM\cong\li_hN$ for some perfect morphisms $g,h$.
Therefore $M$ is triply perfectly linked to $N$.

(3) Suppose that there exist CM modules of codimension one, $M_0,\dots,M_n$ with $n\ge1$, such that $M=M_0\sim M_1\sim\cdots\sim M_n=N$.

First, we consider the case $n=1$.
In this case $M$ is directly perfectly linked to $N$.
By Proposition \ref{basli}(3) and Remark \ref{doko}, $N$ is doubly perfectly linked to $N$ itself.
Thus $M$ is triply perfectly linked to $N$.

Let $n\ge2$.
For each $0\le i\le n-2$ the module $M_i$ is doubly perfectly linked to $M_{i+2}$, and it follows from (1) that $\syz M_i\scong\syz M_{i+2}$.
Hence if $n$ is even, then we have
$$
\syz M=\syz M_0\scong\syz M_2\scong\syz M_4\scong\cdots\scong\syz M_n=\syz N.
$$
Applying (1) again shows that $M$ is doubly perfectly linked to $N$.
If $n$ is odd, then $n-3$ is even, and $\syz M_3$ is stably equivalent to $\syz M_n=\syz N$.
Since $M=M_0$ is triply perfectly linked to $M_3$, by (2) the module $(\syz M)^*$ is stably equivalent to $\syz M_3$.
Hence we get $(\syz M)^*\scong\syz N$, and $M$ is triply perfectly linked to $N$ by (2) again.
\end{proof}

Taking advantage of our Theorem \ref{main2}, we obtain the following example.
This should be remarkable in that it provides a pair of ideals that are not CI-linked.

\begin{ex}
Let $R=k[[x,y]]/(x^2-y^4)$, where $k$ is a field of characteristic not two.
Let $\m=(x,y)$ and $I=(x,y^2)$.
Then the CM $R$-modules $R/\m$ and $R/I$ are not perfectly linked.
In particular, the ideals $\m$ and $I$ are not CI-linked.
\end{ex}

\begin{proof}
Note that $I$ is an $\m$-primary ideal, and thus both $R/\m$ and $R/I$ are CM $R$-modules of codimension $1$.
The last assertion follows from Proposition \ref{liid}(2), so let us show the first assertion.
Let $\p=(x+y^2)$ and $\q=(x-y^2)$ be prime ideals of $R$.
Then $\p$ and $\q$ do not meet, and $I$ is isomorphic to $\p\oplus\q$ as an $R$-module, for $\ch k\ne2$.
Since $\p,\q,\m$ are nonfree indecomposable MCM $R$-modules, $\syz(R/I)=I$ is not stably equivalent to $\m=\syz(R/\m)$.
Note that $\p\cong R/\q$ and $(R/\q)^*\cong(0:\q)=\p$.
Hence $\p^*\cong\p$, and similarly, $\q^*\cong\q$.
Therefore we have $(\syz(R/I))^*=I^*\cong I$, which is not stably equivalent to $\syz(R/\m)$.
Finally, applying Theorem \ref{main2} yields that $R/\m$ and $R/I$ are not perfectly linked.
\end{proof}

\section*{Acknowledgments}
The authors thank the referee for giving them several useful comments and suggestions.


\end{document}